\newcommand{\bm}[1]{\mbox{\boldmath{$#1$}}}
\def\x{{\bf   x}}
\def\({\left(}
\def\[{\left[}
\def\){\right)}
\def\]{\right]}
\def\grad{\nabla }
\newtheorem{lem}{Lemma}
\newtheorem{thm}{Theorem}
\numberwithin{equation}{section}
\numberwithin{thm}{section}
\numberwithin{lem}{section}
\begin{document}
\title{A novel energy factorization  approach for the diffuse-interface model with Peng-Robinson equation of state
}

\author{Jisheng Kou\thanks{School of Civil Engineering, Shaoxing University, Shaoxing 312000, Zhejiang, China; School of Mathematics and Statistics, Hubei Engineering  University, Xiaogan 432000, Hubei, China. Email: {\tt jishengkou@163.com}.} 
\and Shuyu Sun\thanks{Corresponding author. Computational Transport Phenomena Laboratory, Division of Physical Science and Engineering,
King Abdullah University of Science and
Technology, Thuwal 23955-6900, Kingdom of Saudi Arabia.   Email: {\tt shuyu.sun@kaust.edu.sa}.}
\and  Xiuhua Wang\thanks{School of Mathematics and Statistics, Hubei Engineering  University, Xiaogan 432000, Hubei, China.}
}

 \maketitle

\begin{abstract}
The Peng-Robinson equation of state (PR-EoS) has become one of the most  extensively applied   equations of state    in   chemical engineering and petroleum industry due to its excellent accuracy   in predicting the  thermodynamic properties of a wide variety of  materials, especially   hydrocarbons. Although great efforts   have been made to construct efficient numerical methods for the diffuse interface models with PR-EoS,   there is still not a linear numerical scheme that can be proved to preserve the original energy dissipation law.  
In order to pursue  such a  numerical scheme,  we  propose a novel energy factorization (EF) approach, which first factorizes an energy function  into   a product of several factors and then treats the factors  using their  properties to obtain the semi-implicit linear schemes.  
We apply the EF approach to deal with the   Helmholtz free energy density   determined by PR-EoS, and then propose a linear  semi-implicit numerical scheme that   inherits the original energy dissipation law.  Moreover,  the proposed   scheme is proved to satisfy  the maximum principle in both the time semi-discrete   form and the cell-centered finite difference fully discrete form under certain conditions. Numerical results are presented to demonstrate the stability and efficiency of the proposed scheme. 
 
\end{abstract}
\begin{keywords}
 Diffuse interface model; Peng-Robinson equation of state; Energy stability;   Maximum principle.
\end{keywords}
\begin{AMS}
65N30, 65N50, 49S05.
 \end{AMS}

\section{Introduction}

The Peng-Robinson equation of state (PR-EoS)  \cite{Peng1976EOS} has become one of the most popular   and useful   tools for     describing the thermodynamic properties of fluids   in both academic  and industrial fields, especially chemical engineering and petroleum industry \cite{PREOS2017}.
Compared to the well-known Van der Waals equation of state,  PR-EoS can provide   more reasonable accuracy   in predicting the   properties of a wide variety of  materials, such as N$_2$, CO$_2$ and  hydrocarbons. 
PR-EoS has been extensively   applied   for simulation of many important   problems in petroleum  and chemical engineering, for instance,    phase equilibria calculations \cite{jindrova2013fast,mikyvska2015General,mikyvska2011new,kousun2018Flash,Nagarajan1991,firoozabadi1999thermodynamics,mikyvska2018Flash,michelsen1999} and prediction of surface tension between gas and liquid  \cite{miqueu2004modelling,kousun2015CMA,kousun2015SISC,firoozabadi1999thermodynamics}. 
Modeling and simulation of compressible multi-component  two-phase flows with partial miscibility and realistic equations of state (e.g. PR-EoS)  are intensively  studied  in recent years  \cite{qiaosun2014,kousun2015SISC,kousun2015CHE,fan2017componentwise,kou2018nonisothermal,kou2017nonisothermal,kou2018SAV,Li2017IEQ,Peng2017convexsplitting,kouandsun2016multiscale}.     On the basis of  the thermodynamic fundamental  laws and  realistic equations of state  (e.g. PR-EoS),   general diffuse interface models for compressible multi-component  two-phase flows   have been proposed in \cite{kouandsun2017modeling} for isothermal fluids and \cite{kou2018nonisothermal,kou2017nonisothermal} for non-isothermal fluids.   

 This paper is primarily concerned with efficient numerical methods for an isothermal diffuse interface model with PR-EoS. There exist two  primary challenging problems  in designing   numerical schemes for such   model.  One is that  the   Helmholtz free energy density determined by PR-EoS has the  complicated structures and  strong  nonlinearity.  
The other is  that the model follows the energy dissipation law, and  thus numerical schemes should be constructed to preserve this feature at the discrete level.  In this paper, we will resolve the above challenging problems and will develop a  novel  linear, energy stable numerical scheme.

We now provide the up-to-date review regarding the approaches in the literature  employed to  handle  the bulk Helmholtz free energy density of PR-EoS and design  energy stable numerical schemes. One approach is the convex splitting method \cite{Elliott1993ConvexSpliting,Eyre1998ConvexSplitting} that has been  extensively used for various  phase-field models     \cite{shen2015SIAM,Wise2009Convex,Baskaran2013convexsplitting}.  For  the diffuse-interface models with PR-EoS,    the convex splitting schemes inheriting the discrete  energy dissipation law  have been developed   in a series of recent works \cite{qiaosun2014,fan2017componentwise,kouandsun2017modeling,kousun2018Flash,Peng2017convexsplitting,kou2017nonisothermal}.    
 When the convex splitting approach  is applied to the PR-EoS based Helmholtz free energy density, the ideal and repulsion terms are usually  treated implicitly due to their convexity, while the attraction  term with the concavity  is treated explicitly.  The convex splitting approach can produce unconditionally energy stable numerical schemes, but it results in  the nonlinear discrete equations that demand  the complicated implement  of efficient nonlinear iterative solvers  and also cost expensively in the computations.
Approximating    chemical potential by a difference of Helmholtz free energy density,    a fully-implicit  scheme  has been developed in \cite{kousun2015CHE}.  This scheme is   proved to be unconditionally energy stable, but it still suffers from the nonlinearity  of the resulted equations. 

The invariant energy quadratization (IEQ) approach   \cite{Yang2019IEQ,Yang2017IEQ,Yang2017IEQ2} is  a novel and efficient method developed in recent years  and  has been intensively applied  for various  phase-field models. The essential idea of   IEQ        is to transform the bulk free energy  into a quadratic form through introducing   a set of new variables. The new variables can be updated with  time steps via the semi-implicit linear schemes. A  numerical scheme has been developed in \cite{Li2017IEQ}  applying the IEQ approach to PR-EoS. As a   modification of the IEQ approach, the scalar auxiliary variable (SAV) approach has been proposed in \cite{Shen2018SAV}, which introduces a scalar auxiliary variable instead of the space-dependent new variables   in the IEQ approach.  It leads to unconditionally stable numerical schemes, which   only need to solve the linear equations with constant coefficients  at each time step. Recently,  in \cite{kou2018SAV},   the SAV approach  has been applied to construct  unconditionally energy stable numerical schemes for the model proposed in \cite{kouandsun2017modeling}, and moreover,   a component-wise SAV approach has also been developed.  The  numerical schemes constructed by the  IEQ  and SAV approaches are  linear, and consequently   are easy-to-implememt   and very  efficient in computations.   However, while   IEQ and SAV  have become the very useful and successful     tools applied for a variety of phase-field models, 
the produced  schemes use the    transformed  free energies that are generally never equivalent to the original  energies at the discrete level. Indeed,  it has been indicated in \cite{Yang2019IEQ} that  the transformed free energies have   the errors of  the order of time step size against  the original  energies. 
Consequently,  the   schemes constructed by  IEQ and SAV may  not inherit the original energy dissipation law  in theory  although the dissipation of  transformed energies can be proved.   In numerical tests of \cite{kou2018SAV},    the original   energy instability    has   been observed despite the transformed    energies   decrease with time steps.  
To our best knowledge, for the diffuse interface model with PR-EoS, there is not a linear   semi-implicit numerical scheme inheriting  the original energy dissipation law  so far. In this paper, we will propose such a    scheme. 

In this paper, we will propose a novel energy factorization (EF) approach to construct an efficient numerical scheme for the diffuse interface model with PR-EoS. 
  The key  idea of EF  is that we first factorize an energy function  into   a product of several factors and then handle them  by the use of their properties to obtain the semi-implicit linear schemes. The  EF approach will not introduce any new independent energy variable, thus it can ensure the original  energy dissipation law. 
Applying the EF approach to the model with PR-EoS, we will obtain a linear, efficient  semi-implicit numerical scheme that  inherits the original energy dissipation law.  

We note that molar density is the primal variable in the models with PR-EoS.  Moreover, for a realistic substance, molar density shall  have the physical limits under given thermodynamical conditions, and as a result, the maximum principle is essential for numerical methods to ensure that numerical solutions are physically reasonable.  However, there are no results concerning  the maximum principle of numerical schemes for the models with PR-EoS so far.  For the first time, the proposed numerical scheme will be proved to preserve the maximum principle under certain conditions. The proof will be provided for both the semi-discrete time matching scheme and the cell-centered finite difference fully discrete scheme.

The rest of this paper is organized as   follows. In Section 2, we will provide  a brief description of the diffuse interface model with PR-EoS. In Section 3, we will propose the energy factorization approach to deal with the  bulk Helmholtz free energy density. 
 In Section 4,  we will present the semi-discrete time  scheme and prove some theoretical results including the  energy stability and the maximum principle. The fully discrete schemes will be developed  and analyzed in Section 5.  In Section 6,  numerical results will be presented  to validate  the proposed numerical scheme.   Finally,   some concluding remarks are given in Section 7.

\section{Model equations}
We now give a brief description for the   Helmholtz free energy density of a bulk fluid (denoted by $f_{b}$)   determined by Peng-Robinson equation of state \cite{Peng1976EOS}.  We denote by $c$ molar density of a substance. For   specified temperature $T$,   $f_{b}$  is a function of molar density $c$  and can be expressed as a sum of three contributions
\begin{eqnarray}\label{eqBulkHelmholtzEnergy}
    f_b(c)&=& f_b^{\textnormal{ideal}}(c) + f_b^{\textnormal{repulsion}}(c)+f_b^{\textnormal{attraction}}(c),
\end{eqnarray}
where
\begin{eqnarray}\label{eqBulkHelmholtzEnergyIdeal}
    f_b^{\textnormal{ideal}}(c)&=&  c\vartheta_0 +  cRT\ln\(c\),
\end{eqnarray}
\begin{eqnarray}\label{eqBulkHelmholtzEnergyRepulsion}
    f_b^{\textnormal{repulsion}}(c)=-cRT\ln\(1-\beta c\),
\end{eqnarray}
\begin{eqnarray}\label{eqBulkHelmholtzEnergyAttraction}
    f_b^{\textnormal{attraction}}(c)= \frac{\alpha(T)c}{2\sqrt{2}\beta}\ln\(\frac{1+(1-\sqrt{2})\beta c}{1+(1+\sqrt{2})\beta c}\).
\end{eqnarray}
Here, $R$ is the universal gas constant and $\vartheta_0$ is an energy parameter that relies on   the temperature and thermodynamical properties of  a specific substance.   
The substance-specific   parameters $\alpha$ and $\beta$ can be determined from the critical properties and acentric factor of a specific substance    
\begin{equation}\label{eqaibi}
   \alpha= 0.45724\frac{R^2T_{c}^2}{P_{c}}\[1+m(1-\sqrt{T_{r}})\]^2,~~~~\beta= 0.07780\frac{RT_{c}}{P_{c}},
\end{equation}
where  $T_{r}=T/T_{c}$ is the   reduced temperature,     $T_{c}$ and  $P_{c}$ stands for  the   critical temperature and critical pressure respectively, and   $m$ is calculated  from the acentric factor  $\omega$ as follows
 \begin{subequations}\label{eqdefmi}
\begin{equation*}
 m=0.37464 + 1.54226\omega-  0.26992\omega^2 ,~~\omega\leq0.49,
\end{equation*}
\begin{equation*}
 m=0.379642+1.485030\omega-0.164423\omega^2 +0.016666 \omega^3,~~\omega>0.49.
\end{equation*}
\end{subequations}
  
From the physical point of view, $\beta$ is the   effective volume of one mole of a substance. Let $v=\frac{1}{c}$ be molar volume, i.e., the average volume occupied by one mole of a substance, which includes the effective volume $\beta$ and the space between molecules.  Therefore, we know that generally $\beta\ll v$ for gas and liquid. For an ideal and simple example, if the molecules can be approximated as spherical particles,  then we have
\begin{equation*}
 \beta c=\frac{\beta}{v}\leq\frac{\pi}{6} .
\end{equation*}  
This fact suggests that $\beta c$ has an upper bound for a specific substance under specified temperature. 
As a matter of fact,  from the following form of  Peng-Robinson equation of state (PR-EOS) \cite{Peng1976EOS}  
\begin{eqnarray} \label{eqPREOS}
    P = \frac{cRT}{1-\beta c}-\frac{\alpha(T)c^2}{1+\beta c+\beta c(1-\beta c)}> \frac{cRT}{1-\beta c}-\alpha(T)c^2,
\end{eqnarray}
where $P$ is the pressure, we can directly deduce 
\begin{eqnarray*} 
    \beta c <1- \frac{cRT}{P+\alpha(T)c^2}.
\end{eqnarray*}
To justify the boundedness  of $\beta c$ in realistic cases, we consider the species of n-butane. We can calculate  $\beta=7.2381\times10^{-5}$ m$^3$/mol using \eqref{eqaibi} and the physical data of n-butane.   Molar densities of gas and liquid of  n-butane at the temperature   330 K and pressure 106.39 bar are $c^G=249.1123$ mol/m$^3$ and $c^L= 9526.8428$ mol/m$^3$ respectively. Then we have
\begin{eqnarray*} 
    \beta c^G =0.0180,~~~\beta c^L =0.6896.
\end{eqnarray*}
 On the basis of  the above physical observation, we assume that molar density $c$ is always bounded as below
\begin{equation}\label{eqMolDensRange}
 0<c_m\leq c\leq c_M,~~\beta c_M\leq\epsilon_0<1 ,
\end{equation}
where $c_m$, $c_M$ and $\epsilon_0$  are determined by a specific substance under specified pressure and temperature. 
We remark that $\epsilon_0$  usually does not take   some very small value from physical property as stated above.

Since the diffuse  interfaces always exist between gas and liquid phases. In addition to the bulk free energy density,  the gradient free energy density accounting for the effect of the interfaces is defined as  
\begin{equation}\label{eqHelmholtzfreeenergydensity02}
 f_\grad(c)=\frac{1}{2}\kappa|\grad c|^2, 
\end{equation}
where $\kappa>0$ is the influence parameter that can be calculated as follows
\begin{eqnarray*}
 \kappa=\alpha\beta^{2/3}\[a_0(1-T_{r})+a_1\],
\end{eqnarray*}
\begin{eqnarray*}
 a_0=-\frac{10^{-16}}{1.2326+1.3757\omega},~~~~a_1=\frac{10^{-16}}{0.9051+1.5410\omega}.
\end{eqnarray*}

We denote by $f(c)$  the general Helmholtz free energy density   
\begin{eqnarray}\label{eqgeneralchemicalpotential}
    f(c)=f_b(c)+f_\grad(c).
\end{eqnarray}
The chemical potential   is defined  as the variational derivative of $f(c)$ 
\begin{eqnarray}\label{eqgeneralchemicalpotential}
    \mu(c)=\frac{\delta f(c)}{\delta c}=\mu_b(c)-\kappa \Delta c,
\end{eqnarray}
 where $\mu_b(c)=f'_b(c)$ is the bulk chemical potential.

Let $\Omega$ be a connected and smooth space domain.  We now state the model equation as follows \cite{qiaosun2014}
 \begin{subequations}\label{eqOriginalEqns}
\begin{equation}\label{eqOriginalEqns01}
\frac{\partial c}{\partial t}-\kappa \Delta c +\mu_b(c)=\mu_e,
\end{equation}
\begin{equation}\label{eqOriginalEqns02}
\int_\Omega c d\x=c_t,
\end{equation}
\begin{equation}
\grad c \cdot\bm{n}_{\partial\Omega}=0,~~\x\in \partial\Omega,
~~~c(\x,0)=c_0(\x),~~\x\in\Omega,
\end{equation}
\end{subequations}
 where $\bm{n}_{\partial\Omega}$ denotes the normal unit outward vector  to  $\partial \Omega$,  $c_t>0$ is the total moles in $\Omega$  and $\mu_e$ is a Lagrange multiplier  incorporated to enforce total moles conservation.  We note that $\mu_e$  is constant in space but could vary with time. Moreover, as   time   goes on,  the spatial distribution of molar density    $c$ will approach  to an equilibrium  state, and $\mu_e$ will also approach the chemical potential at the equilibrium state.
Here, we consider the homogeneous Neumann boundary condition, but the proposed numerical schemes and theoretical analysis can be directly extended to various boundary conditions, for instance,  Dirichlet boundary conditions.

We note that the model \eqref{eqOriginalEqns} obeys the  energy dissipation law  
\begin{equation}\label{eqOriginalEqns01EnergyDiss}
\frac{\partial }{\partial t}\int_\Omega f(c(\x,t)) d\x=-\int_\Omega\(\frac{\partial c}{\partial t}\)^2d\x.
\end{equation}

%%%%%%%%%%%%%%%%%%%%
\section{Energy factorization approach}\label{secEF}
In this section, we propose a novel energy factorization (EF) approach to construct  a linear, energy stable time matching scheme for the model \eqref{eqOriginalEqns}. 
  The basic idea of EF  is to factorize an energy function or a term of the energy function into   a product of several factors that can be separately treated by the use of their properties. Two different  factorizations are proposed to deal with the ideal term and the repulsion term of the bulk Helmholtz free energy density respectively.

At the time discrete level, we denote the time step size   by $\tau$ and set $t_n=n\tau$. Furthermore, we use $c^{n}$ to denote the approximation of molar  density $c$ at the time $t_n$. 

%%%%%%%
\subsection{Factorization approach for  the ideal term}
The first energy factorization approach is proposed to deal with  the ideal term. We define the function $H(c)=c\ln(c)$, which  can be factorized into the product of a linear function $c$  and a  logarithm function $\ln(c)$. Apparently, $\ln(c)$ is a concave function, and thus, we have
\begin{equation}\label{eqConcavityoflogarithm}
\ln(c^{n+1})\leq\ln(c^n)+\frac{1}{c^{n}}\(c^{n+1}-c^{n}\).
\end{equation}
 For $c^n>0$ and $c^{n+1}>0$, 
 using \eqref{eqConcavityoflogarithm}, we can deduce that 
 \begin{eqnarray}\label{eqIdealTermFactorizatopn01}
H(c^{n+1})-H(c^n)&=&c^{n+1}\ln\(c^{n+1})-c^{n}\ln(c^{n}\)\nonumber\\
&=&\ln(c^{n})\(c^{n+1}-c^{n}\)+c^{n+1}\(\ln(c^{n+1})-\ln(c^{n})\)\nonumber\\
&\leq&\(\ln\(c^{n}\)+\frac{c^{n+1}}{c^{n}}\)\(c^{n+1}-c^{n}\).
\end{eqnarray}
Then the   ideal term $f_b^{\textnormal{ideal}}(c)$ can be estimated as
\begin{equation}\label{eqIdealTermFactorizatopn02}
f_b^{\textnormal{ideal}}(c^{n+1})-f_b^{\textnormal{ideal}}(c^n)\leq\vartheta_0\(c^{n+1}-c^{n}\)+RT\(\ln\(c^{n}\)+\frac{c^{n+1}}{c^{n}}\)\(c^{n+1}-c^{n}\),
\end{equation}
which suggests us to define the ideal part of chemical potential at the $(n+1)$-th time step as
\begin{eqnarray}\label{eqIdealTermFactorizatopn03}
\mu_{\textnormal{ideal}}^{n+1}=\vartheta_0+RT\ln\(c^{n}\)+RT\frac{c^{n+1}}{c^{n}}.
\end{eqnarray}
Therefore, from \eqref{eqIdealTermFactorizatopn02}, we have
\begin{align}\label{eqEnergyFactorizationIdeal}
f_b^{\textnormal{ideal}}(c^{n+1})-f_b^{\textnormal{ideal}}(c^n)\leq \mu_{{\textnormal{ideal}}}^{n+1}\(c^{n+1}-c^n\).
\end{align}
 
We remark that the convex splitting approach   for the ideal term $f_b^{\textnormal{ideal}}(c)$ used in \cite{qiaosun2014} leads to a highly nonlinear scheme,  which demands  the complicated  implementations of  efficient nonlinear iterative solvers and     also costs  expensively  in computations. In contrast, $\mu_{\textnormal{ideal}}^{n+1}$ is semi-implicit and linear with respect to $c^{n+1}$,  thus it is   easy-to-implement and can be solved at   less computational costs.
This approach can also be directly applied for the logarithmic Flory-Huggins potential \cite{Yang2019IEQ,Zhu2018SAV}.

\subsection{Factorization approach for the repulsion term}
Since the function $-\ln\(1-\beta c\)$ is a convex function, we would obtain a  nonlinear chemical potential when the first energy factorization approach dealing with the ideal term is employed for  the repulsion term $f_b^{\textnormal{repulsion}}(c)$. In order to pursue a linear scheme,  we introduce the second energy factorization approach to deal with the repulsion term $f_b^{\textnormal{repulsion}}(c)$. 

 We      define the modified  repulsion energy function as
 \begin{equation}\label{eqEnergyFactorizationRepulsion01}
\widehat{f}_b^{\textnormal{repulsion}}(c)=\lambda c+\frac{1}{RT}f_b^{\textnormal{repulsion}}(c)=\lambda c-c\ln\(1-\beta c\),
\end{equation}
where $\lambda $ is some positive constant relying on the specific   substance.
Apparently,  $\widehat{f}_b^{\textnormal{repulsion}}(c)$ is positive and bounded   for molar density $c$ satisfying \eqref{eqMolDensRange}.
We further define the following intermediate energy function 
\begin{equation}\label{eqEnergyFactorizationRepulsion02}
G(c)=\sqrt{\lambda c-c\ln\(1-\beta c\)}.
\end{equation}
From this, we can  factorize $\widehat{f}_b^{\textnormal{repulsion}}(c)$   into the square of $G(c)$, i.e.,   
\begin{equation}\label{eqEnergyFactorizationRepulsion02A}
\widehat{f}_b^{\textnormal{repulsion}}(c)=G(c)^2.
\end{equation}

For the function $G(c)$, we have the following key lemma regarding the choice of $\lambda$.
\begin{lem}\label{lemEnergyFactorizationRepulsion01}
Assume that molar density $c$ satisfies \eqref{eqMolDensRange}. If $\lambda$ is taken such that
\begin{equation}\label{eqEnergyFactorizationRepulsionConcaveCondition}
\lambda\geq     \frac{ \epsilon_0}{(1-\epsilon_0)^2}+\(   \frac{\epsilon_0^2}{(1-\epsilon_0)^4}-2 \ln\(1-\epsilon_0\)\frac{\epsilon_0}{(1-\epsilon_0)^2}\)^{1/2},
\end{equation}
where $\epsilon_0$ is given in \eqref{eqMolDensRange}, then $G(c)$ is a concave function.
\end{lem}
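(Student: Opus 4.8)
The plan is to show that $G''(c) \le 0$ for all $c$ with $\beta c \le \epsilon_0 < 1$, which gives concavity. Since $G(c) = \sqrt{\widehat f_b^{\textnormal{repulsion}}(c)}$ with $\widehat f_b^{\textnormal{repulsion}}(c) = \lambda c - c\ln(1-\beta c) =: \phi(c) > 0$, I would use the standard identity for the square root of a positive function: writing $G = \phi^{1/2}$, we have
\begin{equation*}
G'' = \frac{1}{2}\phi^{-1/2}\phi'' - \frac{1}{4}\phi^{-3/2}(\phi')^2 = \frac{1}{4}\phi^{-3/2}\bigl(2\phi\phi'' - (\phi')^2\bigr).
\end{equation*}
Since $\phi^{-3/2} > 0$, concavity of $G$ is equivalent to the pointwise inequality $2\phi(c)\phi''(c) \le (\phi'(c))^2$ on the admissible range of $c$. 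So the first step is simply to reduce the claim to this single scalar inequality.

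Next I would compute $\phi'$ and $\phi''$ explicitly. We have $\phi'(c) = \lambda - \ln(1-\beta c) + \frac{\beta c}{1-\beta c}$ and $\phi''(c) = \frac{2\beta}{1-\beta c} - \frac{\beta}{(1-\beta c)^2} \cdot(\text{something})$; more carefully, differentiating $\frac{\beta}{1-\beta c} + \frac{\beta c}{1-\beta c}\cdot\frac{1}{\,}$ — I would just carry out the derivative of $-\ln(1-\beta c) + \frac{\beta c}{1-\beta c}$, obtaining $\phi''(c) = \frac{\beta^2 c}{(1-\beta c)^2} + \frac{\beta}{1-\beta c}$ or similar. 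The key qualitative facts I expect to extract: $\phi'' > 0$ (so $\phi$ is convex, as already noted), $\phi'' $ is bounded above by its value at $c = c_M$ where $\beta c = \epsilon_0$, and $\phi$ itself is bounded below by $\lambda c_m$ (or more crudely, by a positive quantity). The strategy is then to bound the "bad" term $2\phi\phi''$ from above and the "good" term $(\phi')^2$ from below, and check that the choice of $\lambda$ in \eqref{eqEnergyFactorizationRepulsionConcaveCondition} makes the inequality hold.

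The cleanest route is to treat $\lambda$ as the free parameter and observe that $2\phi\phi'' - (\phi')^2$ is a quadratic in $\lambda$ with \emph{negative} leading coefficient: indeed $\phi = \lambda c + (\text{terms free of }\lambda)$ contributes $2c\phi''\lambda$ linearly inside $2\phi\phi''$, while $(\phi')^2 = (\lambda + (\text{terms free of }\lambda))^2$ contributes $\lambda^2$ quadratically with coefficient $1$. Hence $2\phi\phi'' - (\phi')^2 = -\lambda^2 + (\text{linear in }\lambda) + (\text{const})$, and we want this $\le 0$, i.e. $\lambda^2 - (\text{linear})\lambda - (\text{const}) \ge 0$, which holds for $\lambda$ above the larger root of the quadratic. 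I would then bound the coefficients of this quadratic uniformly over the admissible range $\beta c \in (0, \epsilon_0]$: the linear coefficient is controlled by $\sup \phi''/(\text{stuff})$ evaluated at $\beta c = \epsilon_0$, giving the $\frac{\epsilon_0}{(1-\epsilon_0)^2}$ term, and the constant term similarly produces the $-2\ln(1-\epsilon_0)\frac{\epsilon_0}{(1-\epsilon_0)^2}$ contribution under the square root. Matching these bounds to the right-hand side of \eqref{eqEnergyFactorizationRepulsionConcaveCondition} finishes the proof.

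The main obstacle I anticipate is the bookkeeping in step three: one must be careful that the $\lambda$-free terms in $\phi$ and $\phi'$ (namely $-c\ln(1-\beta c)$ and $-\ln(1-\beta c) + \frac{\beta c}{1-\beta c}$) are bounded in the \emph{right direction} — e.g. $-\ln(1-\beta c) \ge 0$ is helpful in $(\phi')^2$ but its cross term with $\lambda$ has a sign that must be tracked, and the factor $c$ (as opposed to $\beta c$) lingering in $2c\phi''\lambda$ needs to be bounded using $c \le \epsilon_0/\beta$ so that the $\beta$'s cancel cleanly and only $\epsilon_0$ survives. Getting a bound that is both valid and as sharp as \eqref{eqEnergyFactorizationRepulsionConcaveCondition} — rather than something weaker — is where the care lies; the monotonicity of $x \mapsto \frac{x}{(1-x)^2}$ and $x\mapsto -\ln(1-x)$ on $(0,1)$ is what lets one reduce everything to the endpoint $\epsilon_0$.
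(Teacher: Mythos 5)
Your plan is essentially the paper's own proof: the paper computes $G''=-\tfrac{1}{4}G^{-3}\bigl((\phi')^2-2\phi\phi''\bigr)$ with $\phi(c)=\lambda c-c\ln(1-\beta c)$, discards the nonnegative terms $-2\lambda\ln(1-\beta c)$, $(\ln(1-\beta c))^2$ and $\bigl(\tfrac{\beta c}{1-\beta c}\bigr)^2$, and then bounds the surviving quadratic in $\lambda$ at the endpoint $\beta c=\epsilon_0$ using the monotonicity of $x\mapsto\tfrac{x}{(1-x)^2}$ and $x\mapsto-\ln(1-x)$, exactly as you outline, with the stated $\lambda$ being the larger root of that quadratic. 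The only slip is your tentative second derivative: the correct value is $\phi''(c)=\tfrac{\beta}{1-\beta c}+\tfrac{\beta}{(1-\beta c)^2}$ (your guessed expression simplifies to $\tfrac{\beta}{(1-\beta c)^2}$ alone), but since you hedged there and the strategy does not depend on it, this does not affect the argument.
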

\begin{proof}
We calculate the first and second derivatives of $G(c)$ as follows
\begin{eqnarray}\label{eqEnergyFactorizationRepulsionConcaveConditionProof01}
G'(c)=\frac{1}{2}G(c)^{-1}\(\lambda-\ln(1-\beta c)+\frac{\beta c}{1-\beta c}\),
\end{eqnarray}
\begin{align}\label{eqEnergyFactorizationRepulsionConcaveConditionProof02}
G''(c)
&=-\frac{1}{4}G(c)^{-3}\(\lambda- \ln(1-\beta c)+ \frac{\beta c}{1-\beta c}\)^2\nonumber\\
&~~~~+\frac{1}{2}G(c)^{-1} \(\frac{\beta}{1-\beta c}+\frac{\beta}{(1-\beta c)^2}\)\nonumber\\
&=-\frac{1}{4}G(c)^{-3}\left(\lambda^2- 2\lambda \ln(1-\beta c)+\(\ln(1-\beta c)\)^2+ \(\frac{\beta c}{1-\beta c}\)^2\right.\nonumber\\
&~~~~\left.-2 \lambda \frac{\beta c}{(1-\beta c)^2}+2 \ln\(1-\beta c\)\frac{\beta c}{(1-\beta c)^2}\right)\nonumber\\
&\leq-\frac{1}{4}G(c)^{-3}\left(\lambda^2-2 \lambda \frac{\beta c}{(1-\beta c)^2}+2 \ln\(1-\beta c\)\frac{\beta c}{(1-\beta c)^2}\right)\nonumber\\
&\leq-\frac{1}{4}G(c)^{-3}\left(\lambda^2-2 \lambda \frac{\epsilon_0}{(1-\epsilon_0)^2}+2 \ln\(1-\epsilon_0\)\frac{\epsilon_0}{(1-\epsilon_0)^2}\right).
\end{align}
Applying the condition \eqref{eqEnergyFactorizationRepulsionConcaveCondition}, we obtain $G''(c)\leq0$, thus $G(c)$ is   concave.
\end{proof}

We can see from the proof of Lemma \ref{lemEnergyFactorizationRepulsion01} that  the condition \eqref{eqEnergyFactorizationRepulsionConcaveCondition}    can be relaxed further substantially. Moreover, $\lambda$ usually does not demand  a large value  in practice since $\epsilon_0$  generally is not very small value; for instance,  we calculate  from \eqref{eqEnergyFactorizationRepulsionConcaveCondition} that $\lambda=27.3656$ in numerical examples. We also note   that $\lambda$ is dimensionless.

The advantage of    the factorization \eqref{eqEnergyFactorizationRepulsion02A} is shown in the following lemma.
%%%%%%%
\begin{lem}\label{lemEnergyFactorizationRepulsionG} 
Assume that   molar density   satisfies \eqref{eqMolDensRange}  and $\lambda$ is taken such that
\eqref{eqEnergyFactorizationRepulsionConcaveCondition} holds. Then
we have
\begin{align}\label{eqEnergyFactorizationRepulsionG}
G(c^{n+1})^2-G(c^n)^2\leq \(G^{n+1}+G(c^n)\)G'(c^n)\(c^{n+1}-c^n\),
\end{align}
where $G^{n+1}$ is  the   linear approximation of $G(c^{n+1})$ as   
\begin{equation}\label{eqEnergyFactorizationRepulsion05}
G^{n+1}=G(c^n)+G'(c^n)\(c^{n+1}-c^n\).
\end{equation}
\end{lem}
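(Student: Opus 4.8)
The plan is to reduce the claimed inequality \eqref{eqEnergyFactorizationRepulsionG} to the concavity of $G$ proved in Lemma~\ref{lemEnergyFactorizationRepulsion01}, combined with the strict positivity of $G$ on the admissible density range.

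First I would rewrite the right-hand side using the definition \eqref{eqEnergyFactorizationRepulsion05} of $G^{n+1}$. Since $G'(c^n)\(c^{n+1}-c^n\)=G^{n+1}-G(c^n)$, the right-hand side of \eqref{eqEnergyFactorizationRepulsionG} equals
\begin{equation*}
\(G^{n+1}+G(c^n)\)\(G^{n+1}-G(c^n)\)=\(G^{n+1}\)^2-G(c^n)^2 .
\end{equation*}
Hence \eqref{eqEnergyFactorizationRepulsionG} is equivalent to the much simpler statement $G(c^{n+1})^2\leq\(G^{n+1}\)^2$, and this is what I would aim to establish.

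Next I would invoke Lemma~\ref{lemEnergyFactorizationRepulsion01}: under the hypothesis \eqref{eqEnergyFactorizationRepulsionConcaveCondition} the function $G$ is concave, so its tangent at $c^n$ lies above its graph, giving $G(c^{n+1})\leq G(c^n)+G'(c^n)\(c^{n+1}-c^n\)=G^{n+1}$. On the other hand, by \eqref{eqEnergyFactorizationRepulsion01}–\eqref{eqEnergyFactorizationRepulsion02A} together with the bound \eqref{eqMolDensRange}, the modified repulsion energy $\widehat f_b^{\textnormal{repulsion}}(c)$ is strictly positive on the admissible range, so $G(c^{n+1})=\sqrt{\widehat f_b^{\textnormal{repulsion}}(c^{n+1})}>0$. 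Combining these two facts yields $0<G(c^{n+1})\leq G^{n+1}$, which in particular forces $G^{n+1}>0$; squaring this chain of inequalities then gives $G(c^{n+1})^2\leq\(G^{n+1}\)^2$, i.e. exactly the reduced statement, and the lemma follows.

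I do not expect a genuine obstacle here. The only point requiring a little care is that one must not square the concavity estimate $G(c^{n+1})\leq G^{n+1}$ carelessly: it is legitimate precisely because $G$ is positive on the relevant range, which in turn guarantees $G^{n+1}$ is positive as well. The remaining ingredient is purely the algebraic identity that recognizes the right-hand side of \eqref{eqEnergyFactorizationRepulsionG} as a difference of squares.
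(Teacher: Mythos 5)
Your argument is correct and is essentially the paper's own proof: both rest on the concavity bound $G(c^{n+1})\leq G^{n+1}$, the nonnegativity of $G$ to justify squaring, and the difference-of-squares identity $\(G^{n+1}\)^2-G(c^n)^2=\(G^{n+1}+G(c^n)\)G'(c^n)\(c^{n+1}-c^n\)$. The only cosmetic difference is that you factor the right-hand side first and then verify the reduced inequality, whereas the paper derives the chain in the forward direction.
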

\begin{proof}
The assumption implies  the concavity of $G(c)$, so we have
\begin{align}\label{eqEnergyFactorizationRepulsionProof01G}
G(c^{n+1})\leq G(c^n)+G'(\phi^n)\(c^{n+1}-c^n\)=G^{n+1}.
\end{align}
Since $G(c)\geq0$, we get $G^{n+1}\geq0$ from \eqref{eqEnergyFactorizationRepulsionProof01G} and consequently 
\begin{equation}\label{eqEnergyFactorizationRepulsionProof02G}
G(c^{n+1})^2\leq |G^{n+1}|^2.
\end{equation}
From \eqref{eqEnergyFactorizationRepulsion05} and \eqref{eqEnergyFactorizationRepulsionProof02G}, we derive   that
\begin{align}\label{eqEnergyFactorizationRepulsionProof03G}
G(c^{n+1})^2-G(c^n)^2&\leq |G^{n+1}|^2-G(c^n)^2\nonumber\\
&=\(G^{n+1}+G(c^n)\)\(G^{n+1}-G(c^n)\)\nonumber\\
&=\(G^{n+1}+G(c^n)\)G'(c^n)\(c^{n+1}-c^n\).
\end{align}
This ends the proof.
\end{proof}

 We are now ready to consider treatment of $f_b^{\textnormal{repulsion}}(c)$ under the condition \eqref{eqEnergyFactorizationRepulsionConcaveCondition}. 
Using Lemma \ref{lemEnergyFactorizationRepulsionG},  we can estimate the energy difference between  two time steps   as
\begin{align}\label{eqEnergyFactorizationRepulsion04}
\widehat{f}_b^{\textnormal{repulsion}}(c^{n+1})-\widehat{f}_b^{\textnormal{repulsion}}(c^{n})&= G(c^{n+1})^2-G(c^{n})^2\nonumber\\
&\leq  \(G^{n+1}+G(c^n)\)G'(c^n)\(c^{n+1}-c^n\).
\end{align}
It follows from \eqref{eqEnergyFactorizationRepulsion01} and \eqref{eqEnergyFactorizationRepulsion04} that
\begin{align}\label{eqEnergyFactorizationRepulsionProof05}
f_b^{\textnormal{repulsion}}(c^{n+1})-f_b^{\textnormal{repulsion}}(c^{n})
&=RT\big(\widehat{f}_b^{\textnormal{repulsion}}(c^{n+1})-\widehat{f}_b^{\textnormal{repulsion}}(c^{n})\big)\nonumber\\
&~~~~-RT\lambda\(c^{n+1}-c^n\)\nonumber\\
&\leq RT\(\(G^{n+1}+G(c^n)\)G'(c^n)-\lambda\)\(c^{n+1}-c^n\).
\end{align}
Thus,
we can define the repulsion chemical potential      at the $(n+1)$-th time step as
\begin{equation}\label{eqEnergyFactorizationRepulsionChemicalPotential}
\mu_{{\textnormal{repulsion}}}^{n+1}=RTG'(c^n)\(G^{n+1}+G(c^n)\)-\lambda RT.
\end{equation}
Substituting \eqref{eqEnergyFactorizationRepulsion05} into \eqref{eqEnergyFactorizationRepulsionChemicalPotential}, we rewrite  
\begin{equation}\label{eqEnergyFactorizationRepulsionChemicalPotentialA}
\mu_{{\textnormal{repulsion}}}^{n+1}
=RTG'(c^n)\(2G(c^n)+G'(c^n)\(c^{n+1}-c^n\)\)-\lambda RT,
\end{equation}
which is a linear function of $c^{n+1}$.  We note that the convex splitting approach   for the repulsion term $f_b^{\textnormal{repulsion}}(c)$   \cite{qiaosun2014} results in a   nonlinear scheme as well as the ideal term. 

The following lemma is a direct consequence  of the above analysis and the definition of $\mu_{{\textnormal{repulsion}}}^{n+1}$. 
\begin{lem}\label{lemEnergyFactorizationRepulsion} 
Assume that   molar density   satisfies \eqref{eqMolDensRange} and $\lambda$ is taken such that
\eqref{eqEnergyFactorizationRepulsionConcaveCondition} holds. Then
we have
\begin{align}\label{eqEnergyFactorizationRepulsion}
f_b^{\textnormal{repulsion}}(c^{n+1})-f_b^{\textnormal{repulsion}}(c^{n})\leq \mu_{{\textnormal{repulsion}}}^{n+1}\(c^{n+1}-c^n\).
\end{align}
\end{lem}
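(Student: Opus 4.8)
The plan is to treat this statement as the paper itself advertises it, namely as the final assembly step that merely packages the estimates already established, so that essentially no new argument is needed; all the genuine content lives in Lemma \ref{lemEnergyFactorizationRepulsion01} (concavity of $G$) and Lemma \ref{lemEnergyFactorizationRepulsionG}. First I would invoke the decomposition \eqref{eqEnergyFactorizationRepulsion01}, writing $f_b^{\textnormal{repulsion}}(c)=RT\bigl(\widehat f_b^{\textnormal{repulsion}}(c)-\lambda c\bigr)$, so that the repulsion-energy increment between time levels $n$ and $n+1$ splits into $RT$ times the increment of $\widehat f_b^{\textnormal{repulsion}}$ minus the linear contribution $RT\lambda(c^{n+1}-c^n)$. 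Then, using the factorization $\widehat f_b^{\textnormal{repulsion}}=G^2$ of \eqref{eqEnergyFactorizationRepulsion02A} and observing that the hypotheses here — \eqref{eqMolDensRange} together with \eqref{eqEnergyFactorizationRepulsionConcaveCondition} — are exactly the hypotheses of Lemma \ref{lemEnergyFactorizationRepulsionG}, I would apply \eqref{eqEnergyFactorizationRepulsionG} to bound $\widehat f_b^{\textnormal{repulsion}}(c^{n+1})-\widehat f_b^{\textnormal{repulsion}}(c^{n})$ from above by $\bigl(G^{n+1}+G(c^n)\bigr)G'(c^n)(c^{n+1}-c^n)$.

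Next I would substitute that bound back into the split increment to reach \eqref{eqEnergyFactorizationRepulsionProof05}, i.e. $f_b^{\textnormal{repulsion}}(c^{n+1})-f_b^{\textnormal{repulsion}}(c^{n})\leq RT\bigl(\bigl(G^{n+1}+G(c^n)\bigr)G'(c^n)-\lambda\bigr)(c^{n+1}-c^n)$. Finally I would simply recognize that the coefficient of $(c^{n+1}-c^n)$ on the right-hand side is, by the very definition \eqref{eqEnergyFactorizationRepulsionChemicalPotential} of the repulsion chemical potential, precisely $\mu_{\textnormal{repulsion}}^{n+1}=RTG'(c^n)\bigl(G^{n+1}+G(c^n)\bigr)-\lambda RT$; plugging this in yields \eqref{eqEnergyFactorizationRepulsion} and closes the argument. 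Thus the proof is literally a two-line chaining of \eqref{eqEnergyFactorizationRepulsionProof05} with \eqref{eqEnergyFactorizationRepulsionChemicalPotential}.

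I do not expect a real obstacle: the one delicate ingredient — that the squared factor $G^2$ admits a one-sided linear majorant in the increment $c^{n+1}-c^n$ — has already been discharged upstream, by choosing $\lambda$ large enough (condition \eqref{eqEnergyFactorizationRepulsionConcaveCondition}) to force $G$ to be concave on the admissible range. The only point worth stating explicitly is the standing assumption implicit throughout, namely that both $c^n$ and $c^{n+1}$ lie in $[c_m,c_M]$ with $\beta c_M\leq\epsilon_0<1$, so that $G$ and $G'$ are well defined at the relevant arguments and $G\geq 0$ there; this positivity and boundedness of the iterates is exactly what the maximum-principle results of Section 4 are designed to guarantee, and without them the concavity step inherited from Lemma \ref{lemEnergyFactorizationRepulsionG} could not be invoked. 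Modulo that consistency remark, the statement is immediate.
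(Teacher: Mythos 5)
Your proposal is correct and follows essentially the same route as the paper: the paper likewise obtains \eqref{eqEnergyFactorizationRepulsionProof05} by combining the decomposition \eqref{eqEnergyFactorizationRepulsion01} with Lemma \ref{lemEnergyFactorizationRepulsionG}, and then reads off \eqref{eqEnergyFactorizationRepulsion} from the definition \eqref{eqEnergyFactorizationRepulsionChemicalPotential} of $\mu_{\textnormal{repulsion}}^{n+1}$. Your added remark that both $c^{n}$ and $c^{n+1}$ must lie in the admissible range for the concavity of $G$ to apply is a fair point of care, consistent with the paper's standing assumption \eqref{eqMolDensRange}.
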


We  remark  that the proposed approach is different from the IEQ and SAV approaches. In the IEQ and SAV approaches,   some new auxiliary energy variables are introduced,  and from this,   the original    energy is transformed to a quadratic form.  Nevertheless, the transformed    energy  is generally  not equivalent to the original   energy at the time discrete level  \cite{Yang2019IEQ}.  In the proposed approach,  we just use  $G(c)$ as a function of $c$, but never introducing  any new independent variable. This is a key feature of the proposed approach that allows  us to apply the concavity of $G(c)$ to obtain the linear  numerical scheme inheriting  the dissipation law of the original   energy at the discrete level.

\section{Semi-implicit time-discrete scheme}
In this section, we propose a semi-implicit time-discrete scheme based on the results presented in Section \ref{secEF}.
The ideal and repulsion terms have been handled by the EF approach. Due to the concavity of the attraction term \cite{qiaosun2014},  we     treat it explicitly   and define the corresponding  chemical potential term as
\begin{equation}
\mu_{{\textnormal{attraction}}}(c^{n})=\frac{\alpha(T)}{2\sqrt{2}\beta}\ln\(\frac{1+(1-\sqrt{2})\beta c^n}{1+(1+\sqrt{2})\beta c^n}\)
-\frac{\alpha(T)c^n}{1+2\beta c^n-(\beta  c^n)^2}.
\end{equation}
Let $\tau$ be the time step size and let $c^0$ be provided by the initial condition, we now state the semi-implicit linear time-discrete scheme as follows
\begin{subequations}\label{eqSemiDiscreteOriginalEqns}
\begin{equation}\label{eqSemiDiscreteOriginalEqns01}
\frac{c^{n+1}-c^n}{\tau }-\kappa \Delta c^{n+1}+\mu_{{\textnormal{ideal}}}^{n+1}+\mu_{{\textnormal{repulsion}}}^{n+1}+\mu_{{\textnormal{attraction}}}(c^{n})=\mu_e^{n+1},
\end{equation}
\begin{equation}\label{eqSemiDiscreteOriginalEqns02}
\int_\Omega c^{n+1}d\x=c_t,
\end{equation}
\begin{equation}
\grad c^{n+1}\cdot\bm{n}_{\partial\Omega}=0,~~\x\in \partial\Omega,
\end{equation}
\end{subequations}
where   $\mu_{{\textnormal{ideal}}}^{n+1}$ and $\mu_{{\textnormal{repulsion}}}^{n+1}$ are defined in 
\eqref{eqIdealTermFactorizatopn03} and \eqref{eqEnergyFactorizationRepulsionChemicalPotentialA} respectively.
For   the convenience of  theoretical analysis,  we rewrite \eqref{eqSemiDiscreteOriginalEqns} as the following equivalent form 
\begin{subequations}\label{eqSemiDiscreteEqns}
\begin{equation}\label{eqSemiDiscreteEqnsA}
\frac{1}{\tau}\(c^{n+1}-c^n\)-\kappa \Delta c^{n+1}+\nu(c^n)c^{n+1}=s_r(c^n)+\mu_e^{n+1},
\end{equation}
\begin{equation}\label{eqSemiDiscreteEqnsB}
\int_\Omega c^{n+1}d\x=c_t,
\end{equation}
\begin{equation}\label{eqSemiDiscreteEqnsC}
\grad c^{n+1}\cdot\bm{n}_{\partial\Omega} =0 ~~\textnormal{on} ~ \partial\Omega,
\end{equation}
\end{subequations}
where  $\nu(c)$ and $s_r(c)$ are the functions of $c$ defined as follows
\begin{equation}\label{eqDiscreteEqnsLinTerm}
\nu(c)=RT\(\frac{ 1}{c}+G'(c)^2\),
\end{equation}
\begin{equation}\label{eqDiscreteEqnsSourceTerm}
s_r(c)=-\vartheta_0-RT\ln\(c\)+RT\(G'(c)^2c-2G(c)G'(c)+\lambda\)
-\mu_{{\textnormal{attraction}}}(c).
\end{equation}

In what follows, we use the traditional notations to denote the inner product of $L^2(\Omega)$ and $\(L^2(\Omega)\)^d$  by $(\cdot,\cdot)$ and the norm of $L^2(\Omega)$ and $\(L^2(\Omega)\)^d$ by $\|\cdot\|$, where $d$ is the spatial dimension.
     
\subsection{ Well-posedness of the solution}\label{secSemiDiscreteScheme}

We now show the existence and uniqueness of the solution  of the semi-discrete scheme \eqref{eqSemiDiscreteEqns} as follows. 
\begin{thm}\label{thmExistenceOfMinimizerOfF}
Assume  that $c^n$ satisfies the condition \eqref{eqMolDensRange} and $\lambda$ is taken to satisfy 
\eqref{eqEnergyFactorizationRepulsionConcaveCondition}. There exists a unique      $c^{n+1}$ to solve
\eqref{eqSemiDiscreteEqns} weakly in $H^1(\Omega)$. 
\end{thm}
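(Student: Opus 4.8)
The plan is to cast \eqref{eqSemiDiscreteEqns} as a linear elliptic problem for the unknown $c^{n+1}$ together with a scalar Lagrange multiplier $\mu_e^{n+1}$, and to establish well-posedness via the Lax--Milgram theorem on the closed affine subspace of $H^1(\Omega)$ enforcing the mass constraint \eqref{eqSemiDiscreteEqnsB}. First I would observe that, since $c^n$ satisfies \eqref{eqMolDensRange} and $\lambda$ satisfies \eqref{eqEnergyFactorizationRepulsionConcaveCondition}, the coefficient $\nu(c^n)$ defined in \eqref{eqDiscreteEqnsLinTerm} is a bounded measurable function that is uniformly positive: indeed $RT/c^n \geq RT/c_M > 0$ and $G'(c^n)^2 \geq 0$, while $G'(c^n)$ is bounded because $G(c^n)$ is bounded away from $0$ (as $\widehat f_b^{\textnormal{repulsion}}$ is positive and bounded on $[c_m,c_M]$) and the bracket in \eqref{eqEnergyFactorizationRepulsionConcaveConditionProof01} is bounded on $[c_m,c_M]$. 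Likewise $s_r(c^n) \in L^\infty(\Omega) \subset L^2(\Omega)$ using the analogous bounds on each term of \eqref{eqDiscreteEqnsSourceTerm}, including $\mu_{\textnormal{attraction}}(c^n)$, whose logarithmic and rational factors are bounded on $[c_m,c_M]$ by \eqref{eqMolDensRange}.

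Next I would set up the variational formulation. Writing $c^{n+1} = \bar c + w$ where $\bar c = c_t/|\Omega|$ is the mean prescribed by \eqref{eqSemiDiscreteEqnsB} and $w$ lies in the space $V = \{v \in H^1(\Omega): \int_\Omega v\, d\x = 0\}$, I would test \eqref{eqSemiDiscreteEqnsA} against $v \in V$, integrate by parts using \eqref{eqSemiDiscreteEqnsC} to kill the boundary term, and note that the constant $\mu_e^{n+1}$ drops out because $\int_\Omega v = 0$. This yields the problem: find $w \in V$ such that
\begin{equation*}
a(w,v) = \ell(v) \qquad \text{for all } v \in V,
\end{equation*}
where $a(w,v) = \tfrac1\tau (w,v) + \kappa(\grad w, \grad v) + (\nu(c^n) w, v)$ and $\ell(v) = (s_r(c^n) + \tfrac1\tau c^n - \tfrac1\tau \bar c - \nu(c^n)\bar c, v)$. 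The bilinear form $a$ is clearly continuous on $H^1(\Omega)$ by the $L^\infty$ bound on $\nu(c^n)$, and it is coercive on $V$: the term $\tfrac1\tau\|v\|^2 + \kappa\|\grad v\|^2$ alone dominates $\min(1/\tau,\kappa)\|v\|_{H^1}^2$, and $(\nu(c^n)v,v) \geq 0$ only helps. The functional $\ell$ is bounded on $V$ since its $L^2$ representative lies in $L^2(\Omega)$. By Lax--Milgram there is a unique $w \in V$, hence a unique $c^{n+1} = \bar c + w \in H^1(\Omega)$ satisfying the mass constraint; and $\mu_e^{n+1}$ is then recovered uniquely as the mean over $\Omega$ of $\tfrac1\tau(c^{n+1}-c^n) - \kappa\Delta c^{n+1} + \nu(c^n)c^{n+1} - s_r(c^n)$ (equivalently, by testing against the constant function and using that $\int_\Omega \Delta c^{n+1} = 0$ weakly), which closes the equivalence with the original form \eqref{eqSemiDiscreteOriginalEqns}.

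The main obstacle — really the only nontrivial point — is verifying that the data are well-behaved, i.e. that $\nu(c^n)$ and $s_r(c^n)$ are genuinely bounded. This rests entirely on the lower bound $c^n \geq c_m > 0$ and the upper bound $\beta c^n \leq \epsilon_0 < 1$ from \eqref{eqMolDensRange}, which keep $\ln(c^n)$, $1/c^n$, $\ln(1-\beta c^n)$, $1/(1-\beta c^n)$, and $G(c^n)^{-1}$ all finite and bounded; once this is in hand the rest is the standard Lax--Milgram argument. I would also remark that no sign condition or smallness of $\tau$ is needed for existence and uniqueness here, precisely because $\nu(c^n) \geq 0$; the positivity of $\nu$ is exactly the payoff of the factorization in Section \ref{secEF}.
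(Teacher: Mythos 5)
Your proposal is correct, but it takes a genuinely different route from the paper. The paper argues by the Fredholm alternative: it considers the associated homogeneous problem \eqref{eqSemiDiscreteEqnsProof02}, multiplies by $c$, integrates, uses the mass constraint to kill the $\mu_e$ term and the positivity of $\nu(c^n)$ to conclude $c\equiv 0$ (hence also $\mu_e=0$), and then infers existence from uniqueness. You instead eliminate the Lagrange multiplier at the outset by working on the zero-mean subspace $V=\{v\in H^1(\Omega):\int_\Omega v\,d\x=0\}$, shifting by the prescribed mean $\bar c=c_t/|\Omega|$, and applying Lax--Milgram to the bilinear form $a(w,v)=\tfrac1\tau(w,v)+\kappa(\grad w,\grad v)+(\nu(c^n)w,v)$, recovering $\mu_e^{n+1}$ a posteriori by integrating the equation over $\Omega$. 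The two approaches hinge on the same two facts --- coercivity coming from $\tfrac1\tau\|v\|^2+\kappa\|\grad v\|^2$ together with $\nu(c^n)\geq 0$, and the boundedness of $\nu(c^n)$ and $s_r(c^n)$ guaranteed by \eqref{eqMolDensRange} --- but yours is the more self-contained and constructive of the two: it does not require justifying that the saddle-point system with the unknown constant $\mu_e^{n+1}$ fits the Fredholm framework (identity plus compact perturbation), and it makes explicit where the right-hand side data $s_r(c^n)\in L^\infty(\Omega)$ enters. The paper's version is shorter and reuses the same energy-type testing computation that recurs throughout Sections 4 and 5. One minor point worth tightening in your write-up: uniform positivity of $\nu(c^n)$ is not actually needed anywhere (as you note at the end), so the opening claim that $\nu(c^n)$ is ``uniformly positive'' could be weakened to nonnegativity plus boundedness without loss.
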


\begin{proof}
Suppose that $c$ is a solution of the following homogeneous problem
\begin{subequations}\label{eqSemiDiscreteEqnsProof02}
\begin{equation}\label{eqSemiDiscreteEqnsProof02a}
\frac{1}{\tau}c-\kappa \Delta c+\nu(c^n)c=\mu_e,
\end{equation}
\begin{equation}\label{eqSemiDiscreteEqnsProof02b}
\int_\Omega c d\x=0, 
\end{equation}
\begin{equation}\label{eqSemiDiscreteEqnsProof02c}
 \grad c\cdot\bm{n}_{\partial\Omega}  =0 ~~\textnormal{on} ~ \partial\Omega.
\end{equation}
\end{subequations}
By Fredholm alternative theorem, it suffices to  prove $c\equiv0$.
Multiplying the   equation \eqref{eqSemiDiscreteEqnsProof02a} by $c$ and integrating it over $\Omega$, we obtain 
\begin{equation}\label{eqSemiDiscreteEqnsUniqueProof02}
\frac{1}{\tau}\|c\|^2+\kappa \|\grad c\|^2+\(\nu(c^n),c^2\)=\(\mu_e,c\).
\end{equation}
The right-hand side term vanishes due to  \eqref{eqSemiDiscreteEqnsProof02b}.
Furthermore, $\nu(c^n)>0$ holds for $c_m\leq c^n\leq c_M$.
The equation \eqref{eqSemiDiscreteEqnsUniqueProof02} can be reduced into
\begin{equation}
\frac{1}{\tau}\|c\|^2+\kappa \|\grad c\|^2\leq0.
\end{equation}
This means that  $c\equiv0$ almost everywhere in $\Omega$  as well as $\mu_e=0$   from \eqref{eqSemiDiscreteEqnsProof02a}. 
\end{proof}

\subsection{Maximum principle}
We now prove that the time scheme given in \eqref{eqSemiDiscreteOriginalEqns} follows the maximum principle, which can ensure  that the schemes presented in Section \ref{secEF} are always well defined.
\begin{thm}\label{thmMaximumPrinciple}
Assume  that $c^n$ satisfies the condition \eqref{eqMolDensRange} and $\lambda$ is taken to satisfy 
\eqref{eqEnergyFactorizationRepulsionConcaveCondition}.
If 
\begin{equation}\label{eqMaximumPrincipleCondition}
 \max_{c_m\leq c\leq c_M}\(c_m\nu(c)-s_r(c)\)\leq\mu_e^{n+1}\leq \min_{c_m\leq c\leq c_M}\(c_M\nu(c)-s_r(c)\),
\end{equation}
then we have $c_m\leq c^{n+1}\leq c_M$ almost everywhere.
\end{thm}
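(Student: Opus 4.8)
The plan is to establish the maximum principle via a pointwise (truncation/test-function) argument on the weak form of the equation \eqref{eqSemiDiscreteEqnsA}, exploiting the sign of the elliptic operator together with the hypothesis \eqref{eqMaximumPrincipleCondition}. First I would recall that by Theorem \ref{thmExistenceOfMinimizerOfF} there is a unique $c^{n+1}\in H^1(\Omega)$ solving \eqref{eqSemiDiscreteEqns}, and that, since $c^n$ satisfies \eqref{eqMolDensRange} and $\lambda$ satisfies \eqref{eqEnergyFactorizationRepulsionConcaveCondition}, both $\nu(c^n)$ and $s_r(c^n)$ are bounded measurable functions on $\Omega$ with $\nu(c^n)>0$; crucially, for a.e.\ $\x$ the value $c^n(\x)$ lies in $[c_m,c_M]$, so the two-sided bound \eqref{eqMaximumPrincipleCondition} applies with $c=c^n(\x)$ pointwise, giving
\begin{equation*}
c_m\,\nu(c^n(\x))-s_r(c^n(\x))\le\mu_e^{n+1}\le c_M\,\nu(c^n(\x))-s_r(c^n(\x))\quad\text{for a.e.\ }\x\in\Omega.
\end{equation*}

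Next I would prove the upper bound $c^{n+1}\le c_M$ a.e. Introduce the truncation $w=(c^{n+1}-c_M)^{+}\in H^1(\Omega)$, which satisfies the homogeneous Neumann condition compatibly and has $\grad w=\grad c^{n+1}\,\mathbf 1_{\{c^{n+1}>c_M\}}$. Testing \eqref{eqSemiDiscreteEqnsA} with $w$ and integrating by parts (the boundary term vanishes by \eqref{eqSemiDiscreteEqnsC}) gives
\begin{equation*}
\frac1\tau\bigl(c^{n+1}-c^n,w\bigr)+\kappa\|\grad w\|^2+\bigl(\nu(c^n)c^{n+1},w\bigr)=\bigl(s_r(c^n)+\mu_e^{n+1},w\bigr).
\end{equation*}
On the set $\{c^{n+1}>c_M\}$ we have $c^{n+1}>c_M\ge c^n$, so $(c^{n+1}-c^n)w\ge w^2\ge0$; moreover using $\mu_e^{n+1}\le c_M\nu(c^n)-s_r(c^n)$ pointwise, the right side is bounded by $(c_M\nu(c^n),w)$, whence $(\nu(c^n)c^{n+1},w)-(c_M\nu(c^n),w)=(\nu(c^n)(c^{n+1}-c_M),w)=(\nu(c^n)w,w)\ge0$ can be moved to the left, yielding $\frac1\tau\|w\|^2+\kappa\|\grad w\|^2+(\nu(c^n)w,w)\le0$. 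Since every term is nonnegative, $w\equiv0$, i.e.\ $c^{n+1}\le c_M$ a.e. The lower bound $c^{n+1}\ge c_m$ is symmetric: test with $w=(c_m-c^{n+1})^{+}$, use $c^{n+1}<c_m\le c^n$ on $\{c^{n+1}<c_m\}$ so $(c^{n+1}-c^n)(-w)\ge w^2$, and use the left inequality in \eqref{eqMaximumPrincipleCondition} to absorb the source term.

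The main obstacle, and the point that needs care, is the handling of the reaction term $\nu(c^n)c^{n+1}$ together with the source: one must verify that the combination $\nu(c^n)(c^{n+1}-c_M)$ (resp.\ $\nu(c^n)(c_m-c^{n+1})$) has the right sign on the truncation set \emph{and} that what remains of the right-hand side after applying \eqref{eqMaximumPrincipleCondition} is exactly $c_M\nu(c^n)$ (resp.\ $c_m\nu(c^n)$) — this is precisely why the hypothesis is phrased as a bound on $c_M\nu(c)-s_r(c)$ rather than on $s_r(c)$ alone, and it is what makes the positivity of $\nu$ (already noted in the proof of Theorem \ref{thmExistenceOfMinimizerOfF}) do the work. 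A secondary technical point is the admissibility of the truncations $(c^{n+1}-c_M)^{+}$ as test functions in the weak formulation with the integral constraint \eqref{eqSemiDiscreteEqnsB}; since $\mu_e^{n+1}$ is already a determined constant once $c^{n+1}$ is fixed, the constraint plays no role in the truncation argument and any $H^1(\Omega)$ function is an admissible test function in \eqref{eqSemiDiscreteEqnsA}, so this causes no difficulty.
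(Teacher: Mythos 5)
Your proposal is correct and follows essentially the same route as the paper: test the equation with the truncations $(c^{n+1}-c_M)^{+}$ and $\min(c^{n+1}-c_m,0)$ (yours differ from the paper's only by a sign convention), use $c_m\le c^n\le c_M$ to get $(c^{n+1}-c^n,w)\ge\|w\|^2$, apply \eqref{eqMaximumPrincipleCondition} pointwise at $c=c^n(\x)$ to absorb the source into the reaction term $\nu(c^n)(c^{n+1}-c_M)$, and conclude from the resulting nonnegative-sum inequality that the truncation vanishes. The only cosmetic difference is that the paper additionally bounds the reaction term below by $\nu(c_M)\|w\|^2$ using the monotonicity of $\nu$, whereas you use only $\nu>0$; both suffice.
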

 
%%%%%%%
\begin{proof}
Suppose that $c_m\leq c^n\leq c_M$ holds for $n\geq0$. We first prove that $c^{n+1}\geq c_m$  almost everywhere.  Let $c_{-}^{n+1}=\min(c^{n+1}-c_m,0)$ and apparently  $c_{-}^{n+1}\leq0$. Multiplying the   equation  \eqref{eqSemiDiscreteEqnsA}   by 
$c_{-}^{n+1}$ and then integrating it over $\Omega$, we have
\begin{align}\label{eqSemiDiscreteSchmMaximumProof01}
\frac{1}{\tau}\(c^{n+1}-c^n,c_{-}^{n+1}\)-\kappa \(\Delta c^{n+1},c_{-}^{n+1}\)
+\(\nu(c^n)(c^{n+1}-c_m),c_-^{n+1}\)\nonumber\\
=\(\mu_e^{n+1},c_-^{n+1}\)+\(s_r(c^n)-c_m\nu(c^n),c_-^{n+1}\).
\end{align}
For the first term on the left-hand side of \eqref{eqSemiDiscreteSchmMaximumProof01}, thanks to $c^n\geq c_m$, we deduce
\begin{align}\label{eqSemiDiscreteSchmMaximumProof02}
\(c^{n+1}-c^n,c_{-}^{n+1}\)=\|c_{-}^{n+1}\|^2-\(c^n-c_m,c_{-}^{n+1}\)\geq\|c_{-}^{n+1}\|^2.
\end{align}
By using the boundary condition, the second term on the left-hand side of \eqref{eqSemiDiscreteSchmMaximumProof01} becomes 
\begin{align}\label{eqSemiDiscreteSchmMaximumProof03}
-\kappa \(\Delta c^{n+1},c_{-}^{n+1}\)=\kappa\|\grad c_{-}^{n+1}\|^2.
\end{align}
We observe that $\nu(c)$ is a strictly monotonically decreasing positive function  over the interval $[c_m,c_M]$. The   third term  on the left-hand side of \eqref{eqSemiDiscreteSchmMaximumProof01} is bounded below  
\begin{align}\label{eqSemiDiscreteSchmMaximumProof04}
\(\nu(c^n)(c^{n+1}-c_m),c_-^{n+1}\)=\(\nu(c^n),|c_-^{n+1}|^2\)\geq\nu(c_M)\|c_-^{n+1}\|^2.
\end{align}
Using the condition \eqref{eqMaximumPrincipleCondition} and taking into account $c_{-}^{n+1}\leq0$, we estimate the first term on the right-hand side of \eqref{eqSemiDiscreteSchmMaximumProof01}   as
\begin{align}\label{eqSemiDiscreteSchmMaximumProof05}
\(\mu_e^{n+1},c_-^{n+1}\)
&\leq\(\max_{c_m\leq c\leq c_M}\(c_m\nu(c)-s_r(c)\),c_-^{n+1}\) \nonumber\\
&\leq\(c_m\nu(c^n)-s_r(c^n),c_-^{n+1}\).
\end{align}
Combining \eqref{eqSemiDiscreteSchmMaximumProof02}-\eqref{eqSemiDiscreteSchmMaximumProof05}, we derive from \eqref{eqSemiDiscreteSchmMaximumProof01} that
\begin{align}\label{eqSemiDiscreteSchmMaximumProof06}
\frac{1}{\tau}\|c_{-}^{n+1}\|^2+\kappa\|\grad c_{-}^{n+1}\|^2+\nu(c_M)\|c_-^{n+1}\|^2
\leq0.
\end{align}
It follows from \eqref{eqSemiDiscreteSchmMaximumProof06} that
 $\|c_{-}^{n+1}\|^2=0$ and $\|\grad c_{-}^{n+1}\|^2=0$. Consequently, $c^{n+1}\geq c_m$  almost everywhere.

We turn to prove $c^{n+1}\leq c_M$   almost everywhere. We define $c_{+}^{n+1}=\max(c^{n+1}-c_M,0)$ and apparently  $c_{+}^{n+1}\geq0$. 
Similar to \eqref{eqSemiDiscreteSchmMaximumProof01}, we can get
\begin{align}\label{eqSemiDiscreteSchmMinimumProof01}
&\frac{1}{\tau}\(c^{n+1}-c^n,c_{+}^{n+1}\)-\kappa \(\Delta c^{n+1},c_{+}^{n+1}\)
+\(\nu(c^n)(c^{n+1}-c_M),c_+^{n+1}\)\nonumber\\
&=\(\mu_e^{n+1},c_+^{n+1}\)+\(s_r(c^n)-\nu(c^n)c_M,c_+^{n+1}\).
\end{align}
Taking into account $c^n\leq c_M$, we deduce
\begin{align}\label{eqSemiDiscreteSchmMinimumProof02}
\(c^{n+1}-c^n,c_{+}^{n+1}\)=\|c_{+}^{n+1}\|^2-\(c^n-c_M,c_{+}^{n+1}\)\geq\|c_{+}^{n+1}\|^2.
\end{align}
Using the condition \eqref{eqMaximumPrincipleCondition} and   $c_{+}^{n+1}\geq0$, we derive
\begin{align}\label{eqSemiDiscreteSchmMinimumProof03}
\(\mu_e^{n+1},c_-^{n+1}\)
&\leq\(\min_{c_m\leq c\leq c_M}\(c_M\nu(c)-s_r(c)\),c_+^{n+1}\) \nonumber\\
&\leq\(c_M\nu(c^n)-s_r(c^n),c_+^{n+1}\).
\end{align}Using the similar routines as in the derivations of  \eqref{eqSemiDiscreteSchmMaximumProof06}, we can obtain
\begin{align}\label{eqSemiDiscreteSchmMinimumProof04}
\frac{1}{\tau}\|c_{+}^{n+1}\|^2+\kappa\|\grad c_{+}^{n+1}\|^2+\nu(c_M)\|c_+^{n+1}\|^2
\leq0.
\end{align}
This implies that
 $\|c_{+}^{n+1}\|^2=0$ and $\|\grad c_{+}^{n+1}\|^2=0$, and thus,  $c^{n+1}\leq c_M$   almost everywhere.
\end{proof}

We remark that it is reasonable to assume the condition \eqref{eqMaximumPrincipleCondition}, as it is likely to be required in a typical physical setting. As a matter of fact, we can    derive its a   priori   bounds just  assuming  $c^{n+1}>0$. 
Integrating the equation \eqref{eqSemiDiscreteEqnsA} over $\Omega$ and using the constraint \eqref{eqSemiDiscreteEqnsB}, we get 
\begin{equation}
\mu_e^{n+1}=\frac{1}{|\Omega|}\int_\Omega\(\nu(c^n)c^{n+1}- s_r(c^n)\)d\x,
\end{equation}
where $|\Omega|$ is the measure of $\Omega$. Using the constraint \eqref{eqSemiDiscreteEqnsB} agian, we can estimate 
\begin{align}
\int_\Omega\nu(c^n)c^{n+1} d\x\leq\max\(\nu(c^n)\)\int_\Omega c^{n+1} d\x \leq \nu(c_m)c_t,
\end{align}
\begin{align}
\int_\Omega\nu(c^n)c^{n+1} d\x\geq \min\(\nu(c^n)\)\int_\Omega c^{n+1} d\x\geq\nu(c_M)c_t.
\end{align}
Let us denote $s_{r}^m=\min_{c\in[c_m,c_M]}(s_r(c))$ and $s_{r}^M=\max_{c\in[c_m,c_M]}(s_r(c))$. Thus, $\mu_e^{n+1}$ is bounded as
\begin{equation}\label{eqMaximumPrincipleConditionRelaxed}
\nu(c_M)\frac{c_t}{|\Omega|}- s_{r}^M\leq\mu_e^{n+1}\leq \nu(c_m)\frac{c_t}{|\Omega|}- s_{r}^m.
\end{equation}
 Numerical results will also be presented to verify  \eqref{eqMaximumPrincipleCondition}.

\subsection{Energy stability}
We define the total free energy as
\begin{equation}\label{eqSemiSchmEnergy}
F(c^n)=\(f_b(c^{n}),1\)+\frac{1}{2}\kappa\|\grad c^{n}\|^2.
\end{equation}

The following theorem shows that the proposed scheme inherits  the dissipation law of the original   energy at the discrete level.

\begin{thm}\label{thmSemiSchmEnergyStability}
Assume  that $c^n$ satisfies   \eqref{eqMolDensRange} and $\lambda$ is taken to satisfy 
\eqref{eqEnergyFactorizationRepulsionConcaveCondition}. 
Under the condition \eqref{eqMaximumPrincipleCondition}, for any time step size $\tau$, we have  
\begin{equation}\label{eqSemiSchmEnergyStability}
F(c^{n+1})\leq F(c^n).
\end{equation}
\end{thm}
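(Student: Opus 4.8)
The plan is to test the semi-discrete equation \eqref{eqSemiDiscreteOriginalEqns01} against the increment $c^{n+1}-c^n$, exactly mirroring the continuous energy law \eqref{eqOriginalEqns01EnergyDiss}, and then use the three factorization inequalities already established to bound each chemical-potential contribution by the corresponding energy difference. First I would take the $L^2$ inner product of \eqref{eqSemiDiscreteOriginalEqns01} with $c^{n+1}-c^n$. The discrete time-derivative term gives $\frac{1}{\tau}\|c^{n+1}-c^n\|^2\geq0$, which is the dissipation term. The diffusion term $-\kappa(\Delta c^{n+1},c^{n+1}-c^n)$, after integration by parts using the Neumann boundary condition, becomes $\kappa(\grad c^{n+1},\grad c^{n+1}-\grad c^n)$, and the elementary identity $a\cdot(a-b)=\tfrac12|a|^2-\tfrac12|b|^2+\tfrac12|a-b|^2$ yields $\kappa(\grad c^{n+1},\grad c^{n+1}-\grad c^n)\geq\frac{\kappa}{2}\|\grad c^{n+1}\|^2-\frac{\kappa}{2}\|\grad c^n\|^2$, i.e. exactly the gradient part of $F(c^{n+1})-F(c^n)$ up to a nonnegative remainder. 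The Lagrange-multiplier term $(\mu_e^{n+1},c^{n+1}-c^n)$ vanishes because $\mu_e^{n+1}$ is constant in space and $\int_\Omega c^{n+1}\,d\x=\int_\Omega c^n\,d\x=c_t$ by \eqref{eqSemiDiscreteOriginalEqns02}.

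The heart of the argument is then the three remaining terms. For the ideal part, \eqref{eqEnergyFactorizationIdeal} gives pointwise $\mu_{\textnormal{ideal}}^{n+1}(c^{n+1}-c^n)\geq f_b^{\textnormal{ideal}}(c^{n+1})-f_b^{\textnormal{ideal}}(c^n)$, so $(\mu_{\textnormal{ideal}}^{n+1},c^{n+1}-c^n)\geq(f_b^{\textnormal{ideal}}(c^{n+1}),1)-(f_b^{\textnormal{ideal}}(c^n),1)$. For the repulsion part, Lemma~\ref{lemEnergyFactorizationRepulsion} gives $(\mu_{\textnormal{repulsion}}^{n+1},c^{n+1}-c^n)\geq(f_b^{\textnormal{repulsion}}(c^{n+1}),1)-(f_b^{\textnormal{repulsion}}(c^n),1)$; here I must invoke Theorem~\ref{thmMaximumPrinciple} (whose hypotheses are the standing assumptions) to guarantee $c^{n+1}$ satisfies \eqref{eqMolDensRange}, so that $G(c^{n+1})$, $\widehat f_b^{\textnormal{repulsion}}(c^{n+1})$ etc. are well defined and the lemma applies. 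For the attraction part, the concavity of $f_b^{\textnormal{attraction}}$ noted in \cite{qiaosun2014} gives $f_b^{\textnormal{attraction}}(c^{n+1})\leq f_b^{\textnormal{attraction}}(c^n)+\mu_{\textnormal{attraction}}(c^n)(c^{n+1}-c^n)$, hence $(\mu_{\textnormal{attraction}}(c^n),c^{n+1}-c^n)\geq(f_b^{\textnormal{attraction}}(c^{n+1}),1)-(f_b^{\textnormal{attraction}}(c^n),1)$. Summing the three and using $f_b=f_b^{\textnormal{ideal}}+f_b^{\textnormal{repulsion}}+f_b^{\textnormal{attraction}}$ bounds the combined chemical-potential term below by $(f_b(c^{n+1}),1)-(f_b(c^n),1)$.

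Putting the pieces together, the inner-product identity reads
\begin{align*}
\frac{1}{\tau}\|c^{n+1}-c^n\|^2
&=-\kappa(\grad c^{n+1},\grad c^{n+1}-\grad c^n)
-(\mu_{\textnormal{ideal}}^{n+1}+\mu_{\textnormal{repulsion}}^{n+1}+\mu_{\textnormal{attraction}}(c^n),c^{n+1}-c^n)\\
&\leq-\Big(\tfrac{\kappa}{2}\|\grad c^{n+1}\|^2-\tfrac{\kappa}{2}\|\grad c^n\|^2\Big)
-\big((f_b(c^{n+1}),1)-(f_b(c^n),1)\big)
=F(c^n)-F(c^{n+1}),
\end{align*}
so $F(c^{n+1})-F(c^n)\leq-\frac{1}{\tau}\|c^{n+1}-c^n\|^2\leq0$, which is \eqref{eqSemiSchmEnergyStability}, with no restriction on $\tau$. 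I expect the main obstacle to be bookkeeping rather than depth: one must be careful that every factorization inequality is applied in the correct direction (each produces a \emph{lower} bound on $\mu^{n+1}(c^{n+1}-c^n)$, which is what is needed since these terms appear with a minus sign after moving them to the right), and one must explicitly cite the maximum-principle theorem to license the use of Lemma~\ref{lemEnergyFactorizationRepulsion} and of $\ln(c^{n+1})$ in the ideal estimate at level $n+1$. A minor subtlety is that the gradient term could equally be handled with the slightly stronger identity retaining the $\frac{\kappa}{2}\|\grad c^{n+1}-\grad c^n\|^2$ remainder, but this is not needed for the stated inequality.
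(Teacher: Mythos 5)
Your proposal is correct and is essentially the paper's own proof: both test the scheme against $c^{n+1}-c^n$, use the factorization inequalities \eqref{eqEnergyFactorizationIdeal}, \eqref{eqEnergyFactorizationRepulsion} and the concavity of $f_b^{\textnormal{attraction}}$ to bound the bulk energy increment, handle the gradient term via the identity $a\cdot(a-b)=\tfrac12|a|^2-\tfrac12|b|^2+\tfrac12|a-b|^2$, and kill the Lagrange multiplier with mass conservation, arriving at $F(c^{n+1})-F(c^n)\leq-\tfrac{1}{\tau}\|c^{n+1}-c^n\|^2$. Your explicit appeal to Theorem \ref{thmMaximumPrinciple} to justify that $c^{n+1}$ stays in the range \eqref{eqMolDensRange} (so that $\ln(c^{n+1})$, $G(c^{n+1})$ and the concavity arguments are licensed) is a point the paper leaves implicit, and is a welcome clarification rather than a deviation.
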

%%%%
\begin{proof}
It follows from  the concavity of $ f_b^{\textnormal{attraction}}(c)$ that
 \begin{align}\label{eqSemiSchmEnergyStabilityProof00}
\(f_b^{\textnormal{attraction}}(c^{n+1})-f_b^{\textnormal{attraction}}(c^{n}),1\)\leq \(\mu_{{\textnormal{attraction}}}(c^{n}),c^{n+1}-c^n\).
\end{align}
Applying \eqref{eqEnergyFactorizationIdeal}, \eqref{eqEnergyFactorizationRepulsion} and \eqref{eqSemiSchmEnergyStabilityProof00}, we deduce that
\begin{align}\label{eqSemiSchmEnergyStabilityProof01}
\(f_b(c^{n+1})-f_b(c^{n}),1\)\leq \(\mu_{{\textnormal{ideal}}}^{n+1}+\mu_{{\textnormal{repulsion}}}^{n+1}+\mu_{{\textnormal{attraction}}}(c^{n}),c^{n+1}-c^n\).
\end{align}
For the gradient contribution to the free energy, we can derive that
\begin{align}\label{eqSemiSchmEnergyStabilityProof02}
\frac{1}{2}\(\|\grad c^{n+1}\|^2-\|\grad c^{n}\|^2\)
&=\frac{1}{2}\int_\Omega\(|\grad c^{n+1}|^2-|\grad c^{n}|^2\)d\x\nonumber\\
&=\int_\Omega \grad c^{n+1}\cdot\grad\(c^{n+1}-c^{n}\)d\x
-\frac{1}{2}\int_\Omega|\grad(c^{n+1}-c^n)|^2d\x\nonumber\\
&\leq\int_\Omega \grad c^{n+1}\cdot\grad\(c^{n+1}-c^{n}\)d\x\nonumber\\
&=-\int_\Omega \(c^{n+1}-c^{n}\)\Delta c^{n+1} d\x.
\end{align}
Taking into account the equation \eqref{eqSemiDiscreteOriginalEqns01} and the mass constraint \eqref{eqSemiDiscreteOriginalEqns02}, we deduce from \eqref{eqSemiSchmEnergyStabilityProof01} and \eqref{eqSemiSchmEnergyStabilityProof02} that
\begin{align}\label{eqSemiSchmEnergyStabilityProof03}
F(c^{n+1})-F(c^n)&=\(f_b(c^{n+1})-f_b(c^{n}),1\)+\frac{1}{2}\kappa\(\|\grad c^{n+1}\|^2-\|\grad c^{n}\|^2\)\nonumber\\
&\leq\(\mu_{{\textnormal{ideal}}}^{n+1}+\mu_{{\textnormal{repulsion}}}^{n+1}+\mu_{{\textnormal{attraction}}}(c^{n})-\kappa\Delta c^{n+1}, c^{n+1}-c^{n}\)\nonumber\\
&=\(\mu_e^{n+1}-\frac{c^{n+1}-c^n}{\tau}, c^{n+1}-c^{n}\)\nonumber\\
&=-\frac{1}{\tau}\|c^{n+1}-c^{n}\|^2.
\end{align}
Thus, the energy inequality \eqref{eqSemiSchmEnergyStability} is proved.
\end{proof}
%%%%%%%
\section{Fully discrete scheme}
In this section, we consider the fully discrete  scheme.  The cell-centered finite difference (CCFD) method \cite{Tryggvason2011book} is employed as the spatial discretization method. We note that the CCFD method is equivalent to a mixed finite element method  with quadrature rules \cite{arbogast1997mixed}. 
Here, we present  the numerical scheme in two-dimensional case only, but it is straightforward to extend it to the  three-dimensional case.

We consider a rectangular domain  as $\Omega=[0,l_x]\times[0,l_y]$, where $l_x>0$ and $l_y>0$.   For simplicity,   a uniform mesh of  $\Omega$  is used as 
$0=x_0<x_1<\cdots<x_{N}=l_x$ and $0=y_0<y_1<\cdots<y_{M}=l_y,$
where $N$ and $M$ are   integers. We also introduce the intermediate points  $x_{i+\frac{1}{2}} =\frac{x_i+x_{i+1}}{2}$ and $y_{j+\frac{1}{2}} =\frac{y_j+y_{j+1}}{2}$. The mesh size is denoted by $h=x_{i+1}-x_i=y_{j+1}-y_j$.

\subsection{Notations and fully discrete scheme}

To formulate the fully discrete scheme, we define the following discrete function spaces:
   \begin{equation*}
\mathcal{V}_c=\big\{c: (x_{i+\frac{1}{2}},y_{j+\frac{1}{2}})\mapsto\mathbb{R},~~0\leq i\leq N-1, ~0\leq j\leq M-1\big\},
  \end{equation*}
\begin{equation*}
\mathcal{V}_u=\big\{u: (x_{i},y_{j+\frac{1}{2}})\mapsto\mathbb{R},~~0\leq i\leq N, ~0\leq j\leq M-1\big\},
  \end{equation*}
\begin{equation*}
\mathcal{V}_v=\big\{v: (x_{i+\frac{1}{2}},y_{j})\mapsto\mathbb{R},~~0\leq i\leq N-1, ~0\leq j\leq M\big\}.
  \end{equation*}
For components of discrete functions  in the above spaces, we  denote  
 $c_{i+\frac{1}{2},j+\frac{1}{2}}=c(x_{i+\frac{1}{2}},y_{j+\frac{1}{2}})$ for $c\in \mathcal{V}_c$, $u_{i,j+\frac{1}{2}}=u(x_{i},y_{j+\frac{1}{2}})$ for $u\in \mathcal{V}_u$ and $v_{i+\frac{1}{2},j}=v(x_{i+\frac{1}{2}},y_{j})$ for $v\in \mathcal{V}_v$.

For $c\in \mathcal{V}_c$, we define the   difference operators  $\delta_x^c[c]\in\mathcal{V}_u$ and $\delta_y^c[c]\in\mathcal{V}_v$ as follows
\begin{subequations}\label{eqFullyDiscreteGradofMolarDens}
\begin{equation}\label{eqFullyDiscreteGradofMolarDens01}
\delta_x^c[c]_{i,j+\frac{1}{2}}=\frac{c_{i+\frac{1}{2},j+\frac{1}{2}}-c_{i-\frac{1}{2},j+\frac{1}{2}}}{h},~~1\leq i\leq N-1, ~0\leq j\leq M-1,
  \end{equation}
\begin{equation}\label{eqFullyDiscreteGradofMolarDens02}
\delta_y^c[c]_{i+\frac{1}{2},j}= 
\frac{c_{i+\frac{1}{2},j+\frac{1}{2}}-c_{i+\frac{1}{2},j-\frac{1}{2}}}{h},~~0\leq i\leq N-1, ~1\leq j\leq M-1.
  \end{equation}
 \end{subequations}
 On the boundary, 
applying the homogeneous  Neumann boundary condition, we take  the difference operators    as  
 \begin{subequations}\label{eqFullyDiscreteGradofMolarDens}
\begin{equation}\label{eqFullyDiscreteGradofMolarDens01}
\delta_x^c[c]_{i,j+\frac{1}{2}}= 0, ~~i\in\{0,N\},~0\leq j\leq M-1,
  \end{equation}
\begin{equation}\label{eqFullyDiscreteGradofMolarDens02}
\delta_y^c[c]_{i+\frac{1}{2},j}= 0, ~~j\in\{0,M\},~0\leq i\leq N-1.
  \end{equation}
 \end{subequations}
We   introduce  the subsets of  $\mathcal{V}_u$ and $\mathcal{V}_v$ involving   the boundary condition  as 
\begin{equation}
\mathcal{V}^0_u=\big\{u\in\mathcal{V}_u| ~u_{0,j+\frac{1}{2}}=u_{N,j+\frac{1}{2}}=0,~~0\leq j\leq M-1\big\},
  \end{equation}
\begin{equation}
\mathcal{V}^0_v=\big\{v\in\mathcal{V}_v|~v_{i+\frac{1}{2},0}=v_{i+\frac{1}{2},M}=0,~~0\leq i\leq N-1\big\}.
  \end{equation}
  Apparently, $\delta_x^c[c]\in \mathcal{V}_u^0$ and $\delta_y^c[c]\in \mathcal{V}_v^0$. 
  
The  difference   operators for   $u\in \mathcal{V}_u $ and $v\in \mathcal{V}_v $ are defined   as
\begin{equation}\label{eqFullyDiscreteGradofMolarDens01}
\delta_x^u[u]_{i+\frac{1}{2},j+\frac{1}{2}}=\frac{u_{i+1,j+\frac{1}{2}}-u_{i,j+\frac{1}{2}}}{h},
\end{equation}
\begin{equation}\label{eqFullyDiscreteGradofMolarDens02}
\delta_y^v[v]_{i+\frac{1}{2},j+\frac{1}{2}}=\frac{v_{i+\frac{1}{2},j+1}-v_{i+\frac{1}{2},j}}{h},
  \end{equation}
  where $0\leq i\leq N-1, ~0\leq j\leq M-1$.
  
    We define the following  discrete inner-products:
\begin{equation*}
\langle c,c'\rangle=h^2\sum_{i=0}^{N-1}\sum_{j=0}^{M-1}c_{i+\frac{1}{2},j+\frac{1}{2}}c'_{i+\frac{1}{2},j+\frac{1}{2}},
~~~c,c' \in\mathcal{V}_c,
\end{equation*}
\begin{equation*}
\langle u,u'\rangle=h^2\sum_{i=1}^{N-1}\sum_{j=0}^{M-1}u_{i,j+\frac{1}{2}}u'_{i,j+\frac{1}{2}},
~~~u,u'\in \mathcal{V}_u^0,
\end{equation*}
\begin{equation*}
\langle v,v'\rangle=h^2\sum_{i=0}^{N-1}\sum_{j=1}^{M-1}v_{i+\frac{1}{2},j}v'_{i+\frac{1}{2},j},~~~v,v'\in \mathcal{V}_v^0.
\end{equation*}
The discrete norms for $c \in\mathcal{V}_c$, $u \in\mathcal{V}_u^0$ and $v \in\mathcal{V}_v^0$ are denoted as
\begin{equation*}
\|c\|=\langle c,c\rangle,~~\|u\|=\langle u,u\rangle,~~\|v\|=\langle v,v\rangle.
\end{equation*}
  
  The semi-implicit fully discrete    scheme  
 is stated: given   $c^{n}\in \mathcal{V}_c$, find   $c^{n+1}\in \mathcal{V}_c$ such that
 \begin{subequations}\label{eqFullyDiscreteSchm}
\begin{equation}\label{eqFullyDiscreteSchmA}
 \frac{ c^{n+1}-n^{n}}{\tau}
 -\kappa\(\delta_x^u[\delta_x^c[c^{n+1}]]+\delta_y^u[\delta_y^c[c^{n+1}]]\)+\nu(c^n)c^{n+1}=s_r(c^n)+\mu_e^{n+1}
 \end{equation}
\begin{equation}\label{eqFullyDiscreteSchmB}
\langle c^{n+1},1\rangle=c_t,
\end{equation}
 \end{subequations}
 where the formulations of functions $\nu(c)$ and $s_r(c)$ are given in \eqref{eqDiscreteEqnsLinTerm} and \eqref{eqDiscreteEqnsSourceTerm} respectively.
 We note that the boundary condition has already been considered in the definitions of  the operators $\delta_x^c$ and $\delta_y^c$ in \eqref{eqFullyDiscreteGradofMolarDens}.
 
 The following summation-by-parts formulas are derived by direct calculations \cite{Wise2009Convex,Wise2010CahnHilliardHeleShaw,shen2016JCP}
\begin{equation}\label{eqFullyDiscreteVariationalPrinciples01}
\langle u,\delta_x^c [c]\rangle=-\langle \delta_x^u [u],c\rangle, ~~u\in \mathcal{V}_u^0,~c\in \mathcal{V}_c,
\end{equation}
\begin{equation}\label{eqFullyDiscreteVariationalPrinciples02}
\langle v,\delta_y^c [c]\rangle=-\langle \delta_y^v [v],c\rangle,~~v\in \mathcal{V}_v^0,~c\in \mathcal{V}_c.
\end{equation}

 \subsection{Unique  solvability}

We first demonstrate the unique  solvability of the linear system \eqref{eqFullyDiscreteSchm}.
\begin{thm}\label{thmSolExistenceOfFullyDiscreteSchm}
Assume  that $c^n\in  \mathcal{V}_c$ satisfies   \eqref{eqMolDensRange} and $\lambda$ is taken to satisfy 
\eqref{eqEnergyFactorizationRepulsionConcaveCondition}.  There exists a unique      $c^{n+1}\in  \mathcal{V}_c$ such that
\eqref{eqFullyDiscreteSchm} holds. 
\end{thm}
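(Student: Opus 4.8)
The plan is to adapt the proof of Theorem~\ref{thmExistenceOfMinimizerOfF} to the discrete setting. Since \eqref{eqFullyDiscreteSchm} is a linear system whose unknowns are the $NM$ nodal values of $c^{n+1}\in\mathcal{V}_c$ together with the single scalar $\mu_e^{n+1}$, and whose equations are the $NM$ relations \eqref{eqFullyDiscreteSchmA} together with the one constraint \eqref{eqFullyDiscreteSchmB}, it is a square linear system; hence unique solvability is equivalent to showing that the associated homogeneous problem admits only the trivial solution.

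First I would write out the homogeneous problem: find $c\in\mathcal{V}_c$ and a scalar $\mu_e$ with $\tfrac{1}{\tau}c-\kappa\bigl(\delta_x^u[\delta_x^c[c]]+\delta_y^v[\delta_y^c[c]]\bigr)+\nu(c^n)c=\mu_e$ and $\langle c,1\rangle=0$. Taking the discrete inner product of this equation with $c$ and applying the summation-by-parts identities \eqref{eqFullyDiscreteVariationalPrinciples01}--\eqref{eqFullyDiscreteVariationalPrinciples02} with $u=\delta_x^c[c]$ and $v=\delta_y^c[c]$ (which belong to $\mathcal{V}_u^0$ and $\mathcal{V}_v^0$ respectively by the boundary prescriptions in \eqref{eqFullyDiscreteGradofMolarDens}), I obtain
\begin{equation*}
\tfrac{1}{\tau}\|c\|^2+\kappa\|\delta_x^c[c]\|^2+\kappa\|\delta_y^c[c]\|^2+\langle\nu(c^n),c^2\rangle=\langle\mu_e,c\rangle .
\end{equation*}
The right-hand side equals $\mu_e\langle c,1\rangle=0$ by the homogeneous constraint. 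Moreover, since $c^n$ satisfies \eqref{eqMolDensRange} and $\lambda$ satisfies \eqref{eqEnergyFactorizationRepulsionConcaveCondition}, $G(c^n)$ is well defined and $\nu(c^n)=RT\bigl(1/c^n+G'(c^n)^2\bigr)>0$ pointwise on the mesh (see \eqref{eqDiscreteEqnsLinTerm}), so $\langle\nu(c^n),c^2\rangle\geq0$. Every term on the left being nonnegative, I conclude $\|c\|=0$, i.e.\ $c\equiv0$, and then the homogeneous equation forces $\mu_e=0$.

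This argument is the exact discrete mirror of Theorem~\ref{thmExistenceOfMinimizerOfF}, with the discrete summation-by-parts formulas playing the role of integration by parts and the finite-dimensional fact that an injective linear map is bijective replacing the Fredholm alternative. The only point requiring care is the bookkeeping: verifying that \eqref{eqFullyDiscreteSchm} is genuinely square once $\mu_e^{n+1}$ is counted among the unknowns, and that the composed difference operators indeed land in $\mathcal{V}_u^0$ and $\mathcal{V}_v^0$ so that \eqref{eqFullyDiscreteVariationalPrinciples01}--\eqref{eqFullyDiscreteVariationalPrinciples02} apply. Both follow directly from the definitions in Section~5.1, so I do not anticipate any substantive obstacle beyond this.
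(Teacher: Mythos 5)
Your proposal is correct and follows essentially the same route as the paper's own proof: reduce to the homogeneous problem, test with $c$, apply the discrete summation-by-parts identities, kill the right-hand side via the zero-mean constraint, and use positivity of $\nu(c^n)$ to force $c\equiv 0$. The only cosmetic difference is that the paper bounds the reaction term below by $\nu(c_M)\|c\|^2$ using monotonicity of $\nu$, whereas you simply use $\langle\nu(c^n),c^2\rangle\geq 0$; both suffice.
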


\begin{proof}
It suffices to  prove the following homogeneous problem 
has a unique zero solution in $\mathcal{V}_c$
\begin{subequations}\label{thmSolExistenceOfFullyDiscreteSchmProof01}
\begin{equation}\label{thmSolExistenceOfFullyDiscreteSchmProof01A}
 \frac{ c}{\tau}
 -\kappa\(\delta_x^u[\delta_x^c[c]]+\delta_y^u[\delta_y^c[c]]\)+\nu(c^n)c=\mu_e,
 \end{equation}
\begin{equation}\label{thmSolExistenceOfFullyDiscreteSchmProof01B}
\langle c,1\rangle=0.
\end{equation}
 \end{subequations}
Suppose that there exists a nonzero solution $c\in\mathcal{V}_c$ satisfying  \eqref{thmSolExistenceOfFullyDiscreteSchmProof01}. We take the inner product of  \eqref{thmSolExistenceOfFullyDiscreteSchmProof01} with $c$ 
\begin{equation}\label{thmSolExistenceOfFullyDiscreteSchmProof02}
 \frac{ 1}{\tau}\|c\|^2 
 -\kappa\langle\delta_x^u[\delta_x^c[c]],c\rangle-\kappa\langle\delta_y^u[\delta_y^c[c]],c\rangle
 +\langle\nu(c^n)c,c\rangle=\langle\mu_e,c\rangle,
 \end{equation}
 which can be further reduced using \eqref{eqFullyDiscreteVariationalPrinciples01}, \eqref{eqFullyDiscreteVariationalPrinciples02} and \eqref{thmSolExistenceOfFullyDiscreteSchmProof01B} into
\begin{equation}\label{eqSemiDiscreteEqnsUniqueProof03}
\frac{1}{\tau}\|c\|^2+\kappa \(\|\delta_x^c[c]\|^2+\|\delta_y^c[c]\|^2\)+\nu(c_M)\|c\|^2\leq0.
\end{equation}
Since $\nu(c_M)>0$, we concluded from \eqref{eqSemiDiscreteEqnsUniqueProof03} that  $c\equiv0$. This yields a contradiction, and thus, \eqref{eqFullyDiscreteSchm} is    uniquely  solvable.
\end{proof}

\subsection{Discrete maximum principle}
We are going to prove the discrete maximum principle of molar density. The following lemma is an essential ingredient  of the   proof.
\begin{lem}\label{lemDiscreteLapalace}
Let   $c^-=\min(c-c_m,0)$ and $c^+=\max(c-c_M,0)$, where $c\in \mathcal{V}_c$ and $c_m<c_M$. Then we have
\begin{equation}\label{eqDiscreteLapalace01A}
\langle \delta_x^c [c^-],\delta_x^c [c^-]\rangle\leq-\langle \delta_x^u [\delta_x^c [c]],c^-\rangle,
\end{equation}
\begin{equation}\label{eqDiscreteLapalace01C}
\langle \delta_x^c [c^+],\delta_x^c [c^+]\rangle\leq-\langle \delta_x^u [\delta_x^c [c]],c^+\rangle,
\end{equation}
\begin{equation}\label{eqDiscreteLapalace01B}
\langle \delta_y^c [c^-],\delta_y^c [c^-]\rangle\leq-\langle \delta_y^v [\delta_y^c [c]],c^-\rangle,
\end{equation}
\begin{equation}\label{eqDiscreteLapalace01D}
\langle \delta_y^c [c^+],\delta_y^c [c^+]\rangle\leq-\langle \delta_y^v [\delta_y^c [c]],c^+\rangle.
\end{equation}
\end{lem}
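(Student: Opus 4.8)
The plan is to reduce each of the four inequalities to an edgewise monotonicity statement by means of the discrete summation-by-parts formulas \eqref{eqFullyDiscreteVariationalPrinciples01}--\eqref{eqFullyDiscreteVariationalPrinciples02}. I will carry out the argument for \eqref{eqDiscreteLapalace01A} in detail; the other three then follow by the obvious replacements ($x\mapsto y$, using \eqref{eqFullyDiscreteVariationalPrinciples02} in place of \eqref{eqFullyDiscreteVariationalPrinciples01}, and/or $c^-\mapsto c^+$). First I would note that, since $c\in\mathcal{V}_c$ and $c^-$ is obtained from $c$ by a cellwise operation, we have $c^-\in\mathcal{V}_c$, and therefore both $\delta_x^c[c]$ and $\delta_x^c[c^-]$ lie in $\mathcal{V}_u^0$ (this is where the homogeneous Neumann condition \eqref{eqFullyDiscreteGradofMolarDens} is built in). Applying \eqref{eqFullyDiscreteVariationalPrinciples01} with $u=\delta_x^c[c]\in\mathcal{V}_u^0$ and with the $\mathcal{V}_c$-argument taken to be $c^-$ gives
$$-\langle \delta_x^u[\delta_x^c[c]],c^-\rangle=\langle \delta_x^c[c],\delta_x^c[c^-]\rangle,$$
so that \eqref{eqDiscreteLapalace01A} is equivalent to $\langle \delta_x^c[c^-],\delta_x^c[c^-]\rangle\le\langle \delta_x^c[c],\delta_x^c[c^-]\rangle$, i.e. to
$$\big\langle \delta_x^c[\,c-c^-\,],\,\delta_x^c[c^-]\,\big\rangle\ge 0.$$

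Next I would identify the two cellwise functions at play: by definition $c-c^-=\max(c,c_m)$, so setting $g(t)=\max(t,c_m)$ and $\phi(t)=\min(t-c_m,0)$, the quantity to be bounded below is $\langle \delta_x^c[g(c)],\delta_x^c[\phi(c)]\rangle$, where $g(c),\phi(c)$ denote the compositions applied cell by cell. The key observation is that $g$ and $\phi$ are both nondecreasing on $\mathbb{R}$; hence for any two reals $a,b$ one has $(g(a)-g(b))(\phi(a)-\phi(b))\ge 0$. Writing out the inner product edge by edge, each interior term ($1\le i\le N-1$) equals
$$h^{-2}\big(g(c_{i+\frac12,j+\frac12})-g(c_{i-\frac12,j+\frac12})\big)\big(\phi(c_{i+\frac12,j+\frac12})-\phi(c_{i-\frac12,j+\frac12})\big)\ge 0,$$
and the boundary edges $i\in\{0,N\}$ are excluded from $\langle\cdot,\cdot\rangle$ on $\mathcal{V}_u^0$ (and vanish anyway by \eqref{eqFullyDiscreteGradofMolarDens}). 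Summing over $i,j$ yields the required nonnegativity, hence \eqref{eqDiscreteLapalace01A}.

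For \eqref{eqDiscreteLapalace01C} the same computation applies, now with $c-c^+=\min(c,c_M)$, so $g(t)$ is replaced by $\min(t,c_M)$ and $\phi(t)$ by $\max(t-c_M,0)$, both still nondecreasing; \eqref{eqDiscreteLapalace01B} and \eqref{eqDiscreteLapalace01D} are the $y$-direction analogues obtained identically from \eqref{eqFullyDiscreteVariationalPrinciples02}. There is no genuine obstacle here beyond bookkeeping: the only points requiring care are matching function spaces correctly when invoking summation-by-parts (so that $\delta_x^c[c]\in\mathcal{V}_u^0$, which is precisely where the boundary condition enters) and recognizing that the monotonicity of $g$ and $\phi$ makes every edgewise product nonnegative, so no case distinction on the sign of $c-c_m$ (or $c-c_M$) at neighbouring cell centers is needed.
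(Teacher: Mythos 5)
Your proof is correct and follows essentially the same route as the paper's: reduce via the summation-by-parts formulas \eqref{eqFullyDiscreteVariationalPrinciples01}--\eqref{eqFullyDiscreteVariationalPrinciples02} and then conclude by an elementary edgewise inequality. Your monotonicity packaging --- that $\(g(a)-g(b)\)\(\phi(a)-\phi(b)\)\ge 0$ for the nondecreasing functions $g=\max(\cdot,c_m)$ and $\phi=\min(\cdot-c_m,0)$ --- is an equivalent (and arguably cleaner) restatement of the paper's pointwise estimates $\(a-c_m\)a^-=|a^-|^2$ and $\(a-c_m\)b^-\le a^-b^-$, which combine to give $\(a-b\)\(a^--b^-\)\ge\(a^--b^-\)^2$ on each edge.
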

\begin{proof}
Let $a$ and $b$ be two real scalar numbers. We further define  $a^-=\min(a-c_m,0)$ and $b^-=\min(b-c_m,0)$.
It is  apparent that
\begin{equation}\label{eqDiscreteLapalaceProof01A}
\(a-c_m\)a^-
=  |a^-|^2,~~~\(b-c_m\)b^-
=  |b^-|^2.
 \end{equation}
Since $a-c_m\geq a^-$ and $b^-\leq0$, we get
 \begin{equation}\label{eqDiscreteLapalaceProof01B}
\(a-c_m\)b^-
\leq a^-b^-.
 \end{equation}
Applying \eqref{eqFullyDiscreteVariationalPrinciples01}, \eqref{eqDiscreteLapalaceProof01A} and \eqref{eqDiscreteLapalaceProof01B}, we can derive the inequality \eqref{eqDiscreteLapalace01A} as follows 
\begin{align}\label{eqDiscreteLapalaceProof02}
-\langle \delta_x^u [\delta_x^c [c]],c^-\rangle&=\langle \delta_x^c [c],\delta_x^c [c^-]\rangle\nonumber\\
&=h^2\sum_{i=1}^{N-1}\sum_{j=0}^{M-1}\delta_x^c [c]_{i,j+\frac{1}{2}}\delta_x^c [c^-]_{i,j+\frac{1}{2}}\nonumber\\
&=\sum_{i=1}^{N-1}\sum_{j=0}^{M-1}\(c_{i+\frac{1}{2},j+\frac{1}{2}}-c_{i-\frac{1}{2},j+\frac{1}{2}}\)\(c^-_{i+\frac{1}{2},j+\frac{1}{2}}-c^-_{i-\frac{1}{2},j+\frac{1}{2}}\)\nonumber\\
&\geq\sum_{i=1}^{N-1}\sum_{j=0}^{M-1}\(|c^-_{i+\frac{1}{2},j+\frac{1}{2}}|^2+|c^-_{i-\frac{1}{2},j+\frac{1}{2}}|^2
-2c^-_{i+\frac{1}{2},j+\frac{1}{2}}c^-_{i-\frac{1}{2},j+\frac{1}{2}}\)
\nonumber\\
&=\sum_{i=1}^{N-1}\sum_{j=0}^{M-1}\(c^-_{i+\frac{1}{2},j+\frac{1}{2}}-c^-_{i-\frac{1}{2},j+\frac{1}{2}}\)^2\nonumber\\
&=\langle \delta_x^c [c^-],\delta_x^c [c^-]\rangle.
\end{align}

We now prove the inequality \eqref{eqDiscreteLapalace01C}. 
Let  $a^+=\max(a-c_M,0)$ and $b^+=\max(b-c_M,0)$, then we have
\begin{equation}\label{eqDiscreteLapalaceProof03}
\(a-c_M\)a^+
=  |a^+|^2,~~~\(b-c_M\)b^+
=  |b^+|^2.
 \end{equation}
Since $a-c_M\leq a^+$ and $b^+\geq0$, we get
 \begin{equation}\label{eqDiscreteLapalaceProof04}
\(a-c_M\)b^+
\leq a^+b^+.
 \end{equation}
Applying \eqref{eqFullyDiscreteVariationalPrinciples01}, \eqref{eqDiscreteLapalaceProof03} and \eqref{eqDiscreteLapalaceProof04}, we deduce the inequality \eqref{eqDiscreteLapalace01C} as     
\begin{align}\label{eqDiscreteLapalaceProof05}
-\langle \delta_x^u [\delta_x^c [c]],c^+\rangle&=\langle \delta_x^c [c],\delta_x^c [c^+]\rangle\nonumber\\
&=h^2\sum_{i=1}^{N-1}\sum_{j=0}^{M-1}\delta_x^c [c]_{i,j+\frac{1}{2}}\delta_x^c [c^+]_{i,j+\frac{1}{2}}\nonumber\\
&=\sum_{i=1}^{N-1}\sum_{j=0}^{M-1}\(c_{i+\frac{1}{2},j+\frac{1}{2}}-c_{i-\frac{1}{2},j+\frac{1}{2}}\)\(c^+_{i+\frac{1}{2},j+\frac{1}{2}}-c^+_{i-\frac{1}{2},j+\frac{1}{2}}\)\nonumber\\
&\geq\sum_{i=1}^{N-1}\sum_{j=0}^{M-1}\(|c^+_{i+\frac{1}{2},j+\frac{1}{2}}|^2+|c^+_{i-\frac{1}{2},j+\frac{1}{2}}|^2
-2c^+_{i+\frac{1}{2},j+\frac{1}{2}}c^+_{i-\frac{1}{2},j+\frac{1}{2}}\)
\nonumber\\
&=\langle \delta_x^c [c^+],\delta_x^c [c^+]\rangle.
\end{align}
The rest inequalities \eqref{eqDiscreteLapalace01B} and \eqref{eqDiscreteLapalace01D} can be proved by the similar approaches. 
\end{proof}
 
 We are now ready to prove the maximum principle of the fully discrete scheme.
\begin{thm}\label{thmFullyDiscreteMaximumPrinciple}
Assume  that $c^n\in  \mathcal{V}_c$ satisfies   \eqref{eqMolDensRange} and $\lambda$ is taken to satisfy 
\eqref{eqEnergyFactorizationRepulsionConcaveCondition}.  
Under the condition \eqref{eqMaximumPrincipleCondition}, we have  $c_m\leq c^{n+1}\leq c_M$.
\end{thm}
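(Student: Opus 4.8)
The proof of the fully discrete maximum principle will mirror the proof of Theorem \ref{thmMaximumPrinciple} for the semi-discrete scheme, with the continuous Laplacian replaced by the CCFD discrete Laplacian, and with the integration-by-parts step replaced by Lemma \ref{lemDiscreteLapalace}. Assume inductively that $c_m\leq c^n\leq c_M$; I will show $c^{n+1}\geq c_m$ first, then $c^{n+1}\leq c_M$ by the symmetric argument. Set $c^-=\min(c^{n+1}-c_m,0)\in\mathcal{V}_c$. Take the discrete inner product of \eqref{eqFullyDiscreteSchmA} with $c^-$, after rewriting the reaction term as $\nu(c^n)c^{n+1}=\nu(c^n)(c^{n+1}-c_m)+c_m\nu(c^n)$, so that
\begin{align*}
\frac{1}{\tau}\langle c^{n+1}-c^n,c^-\rangle
&-\kappa\big\langle \delta_x^u[\delta_x^c[c^{n+1}]]+\delta_y^u[\delta_y^c[c^{n+1}]],c^-\big\rangle
+\langle\nu(c^n)(c^{n+1}-c_m),c^-\rangle\\
&=\langle\mu_e^{n+1},c^-\rangle+\langle s_r(c^n)-c_m\nu(c^n),c^-\rangle.
\end{align*}

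Now I estimate each term exactly as in the continuous case. For the time-difference term, $\langle c^{n+1}-c^n,c^-\rangle=\|c^-\|^2-\langle c^n-c_m,c^-\rangle\geq\|c^-\|^2$ since $c^n\geq c_m$ and $c^-\leq 0$ pointwise. For the diffusion term, applying \eqref{eqDiscreteLapalace01A} and \eqref{eqDiscreteLapalace01B} from Lemma \ref{lemDiscreteLapalace} with $c=c^{n+1}$ gives
$-\kappa\big\langle \delta_x^u[\delta_x^c[c^{n+1}]]+\delta_y^u[\delta_y^c[c^{n+1}]],c^-\big\rangle\geq \kappa\big(\|\delta_x^c[c^-]\|^2+\|\delta_y^c[c^-]\|^2\big)\geq 0$. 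For the reaction term, $\nu(c^n)>0$ pointwise on $[c_m,c_M]$ and $\langle\nu(c^n)(c^{n+1}-c_m),c^-\rangle=\langle\nu(c^n),|c^-|^2\rangle\geq\nu(c_M)\|c^-\|^2$ using the monotone decrease of $\nu$. For the right-hand side, the condition \eqref{eqMaximumPrincipleCondition} gives $\mu_e^{n+1}\leq \max_{c_m\leq c\leq c_M}(c_m\nu(c)-s_r(c))$, and since $c^-\leq 0$, we get $\langle\mu_e^{n+1},c^-\rangle\geq\langle c_m\nu(c^n)-s_r(c^n),c^-\rangle$, which exactly cancels the last term on the right. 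Combining yields $\frac{1}{\tau}\|c^-\|^2+\kappa\big(\|\delta_x^c[c^-]\|^2+\|\delta_y^c[c^-]\|^2\big)+\nu(c_M)\|c^-\|^2\leq 0$, hence $\|c^-\|=0$, i.e. $c^{n+1}\geq c_m$.

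For the upper bound, set $c^+=\max(c^{n+1}-c_M,0)\geq 0$, rewrite $\nu(c^n)c^{n+1}=\nu(c^n)(c^{n+1}-c_M)+c_M\nu(c^n)$, pair \eqref{eqFullyDiscreteSchmA} with $c^+$, use $\langle c^{n+1}-c^n,c^+\rangle\geq\|c^+\|^2$ (since $c^n\leq c_M$, $c^+\geq 0$), use \eqref{eqDiscreteLapalace01C} and \eqref{eqDiscreteLapalace01D} for the diffusion term, use $\langle\nu(c^n)(c^{n+1}-c_M),c^+\rangle\geq\nu(c_M)\|c^+\|^2$, and use the right inequality in \eqref{eqMaximumPrincipleCondition} together with $c^+\geq 0$ to bound $\langle\mu_e^{n+1},c^+\rangle\leq\langle c_M\nu(c^n)-s_r(c^n),c^+\rangle$, which again cancels the source contribution. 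This produces $\frac{1}{\tau}\|c^+\|^2+\kappa\big(\|\delta_x^c[c^+]\|^2+\|\delta_y^c[c^+]\|^2\big)+\nu(c_M)\|c^+\|^2\leq 0$, so $\|c^+\|=0$ and $c^{n+1}\leq c_M$. The induction then closes since $c^0$ satisfies \eqref{eqMolDensRange} by hypothesis.

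The only genuinely nontrivial ingredient is the discrete diffusion estimate, and that has already been isolated as Lemma \ref{lemDiscreteLapalace}; the subtlety there — that $c^-$ (or $c^+$) need not vanish on the boundary, so one cannot simply invoke the summation-by-parts identity with the truncated function in the naive way — is exactly what the pointwise inequalities \eqref{eqDiscreteLapalaceProof01B} and \eqref{eqDiscreteLapalaceProof04} handle. Given Lemma \ref{lemDiscreteLapalace}, the remaining work is bookkeeping identical to the semi-discrete proof, so I anticipate no further obstacle; one should only be careful that the pointwise positivity and monotonicity of $\nu$ used at the discrete level follow from $c^n$ taking values in $[c_m,c_M]$ cell by cell, which is the inductive hypothesis.
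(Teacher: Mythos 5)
Your proof is correct and follows essentially the same route as the paper, whose own proof of this theorem simply reruns the argument of Theorem \ref{thmMaximumPrinciple} with Lemma \ref{lemDiscreteLapalace} replacing the integration by parts for the discrete Laplacian --- exactly your plan. One sign slip worth fixing: in the lower-bound step, \eqref{eqMaximumPrincipleCondition} gives $\mu_e^{n+1}\geq\max_{c_m\leq c\leq c_M}(c_m\nu(c)-s_r(c))\geq c_m\nu(c^n)-s_r(c^n)$, so with $c_{-}^{n+1}\leq 0$ the intermediate inequality should read $\langle\mu_e^{n+1},c_{-}^{n+1}\rangle\leq\langle c_m\nu(c^n)-s_r(c^n),c_{-}^{n+1}\rangle$ (you stated both the condition and this inequality with the directions reversed), although the cancellation and the final estimate you then draw from it are the correct ones.
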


\begin{proof}
It can be proved using the similar   routines as in the proof of  Theorem \ref{thmMaximumPrinciple}. The major difference is that we use   Lemma \ref{lemDiscreteLapalace} to treat the discrete form of Laplace operator. So we just give a brief proof.

Assuming  that $c_m\leq c^n\leq c_M$   for $n\geq0$,  we will  prove that $c_m\leq c^{n+1}\leq c_M$.
 Let $c_{-}^{n+1}=\min(c^{n+1}-c_m,0)$. It can be derived from  \eqref{eqFullyDiscreteSchmA}   that
 \begin{align}\label{eqFullyDiscreteSchmMaximumProof01}
&\frac{1}{\tau }\langle c^{n+1}-c^n,c_{-}^{n+1}\rangle-\kappa \langle\delta_x^u[\delta_x^c[c^{n+1}]]+\delta_y^u[\delta_y^c[c^{n+1}]],c_{-}^{n+1}\rangle\nonumber\\
&+\langle\nu(c^n)(c^{n+1}-c_m),c_-^{n+1}\rangle
=\langle\mu_e^{n+1},c_-^{n+1}\rangle+\langle s_r(c^n)-c_m\nu(c^n),c_-^{n+1}\rangle.
\end{align}
The second term on the left-hand side of \eqref{eqFullyDiscreteSchmMaximumProof01} can be estimated using     Lemma \ref{lemDiscreteLapalace} 
\begin{align}\label{eqFullyDiscreteSchmMaximumProof03}
-\kappa \langle\delta_x^u[\delta_x^c[c^{n+1}]]+\delta_y^u[\delta_y^c[c^{n+1}]],c_{-}^{n+1}\rangle
\geq\kappa\(\|\delta_x^c[c_-^{n+1}]\|^2+\|\delta_y^c[c_-^{n+1}]\|^2\).
\end{align}
Following  the similar  routines  used to deduce \eqref{eqSemiDiscreteSchmMaximumProof06}, we obtain
\begin{align}\label{eqFullyDiscreteSchmMaximumProof06}
\frac{1}{\tau}\|c_{-}^{n+1}\|^2+\kappa\(\|\delta_x^c[c_-^{n+1}]\|^2+\|\delta_y^c[c_-^{n+1}]\|^2\)+\nu(c_M)\|c_-^{n+1}\|^2
\leq0,
\end{align}
which yields $c_{-}^{n+1}=0$, and thus, $c^{n+1}\geq c_m$.
It is similar to prove $c^{n+1}\leq c_M$.
\end{proof}

\subsection{Energy stability}
We denote the discrete total free energy as
\begin{equation}\label{eqFullydiscreteEnergy}
F_h(c^n)=\langle f_b(c^{n}),1\rangle+\frac{1}{2}\(\|\delta_x^c[c^{n}]\|^2+\|\delta_y^c[c^{n}]\|^2\).
\end{equation}
\begin{thm}\label{thmFullySchmEnergyStability}
Assume  that $c^n$ satisfies   \eqref{eqMolDensRange} and $\lambda$ is taken to satisfy 
\eqref{eqEnergyFactorizationRepulsionConcaveCondition}.  
Under the condition \eqref{eqMaximumPrincipleCondition}, for any time step size $\tau$, we have  
\begin{equation}\label{eqFullySchmEnergyStability}
F_h(c^{n+1})\leq F_h(c^n).
\end{equation}
\end{thm}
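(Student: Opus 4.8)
The plan is to transcribe the proof of Theorem~\ref{thmSemiSchmEnergyStability} to the cell-centered discrete setting, replacing integration by parts with the summation-by-parts formulas \eqref{eqFullyDiscreteVariationalPrinciples01}--\eqref{eqFullyDiscreteVariationalPrinciples02}. The first step is to invoke Theorem~\ref{thmFullyDiscreteMaximumPrinciple}: under \eqref{eqMaximumPrincipleCondition} it guarantees $c_m\le c^{n+1}\le c_M$, so that at every cell center both $c^n$ and $c^{n+1}$ lie in $[c_m,c_M]$ and the energy estimates of Section~\ref{secEF} apply at the new time level.

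For the bulk contribution, I would observe that \eqref{eqEnergyFactorizationIdeal}, \eqref{eqEnergyFactorizationRepulsion} and the attraction concavity inequality \eqref{eqSemiSchmEnergyStabilityProof00} are pointwise inequalities in the scalar molar density; multiplying each by $h^2$ and summing over $0\le i\le N-1$, $0\le j\le M-1$ gives
\begin{equation*}
\langle f_b(c^{n+1})-f_b(c^n),1\rangle\le\big\langle \mu_{\textnormal{ideal}}^{n+1}+\mu_{\textnormal{repulsion}}^{n+1}+\mu_{\textnormal{attraction}}(c^{n}),\,c^{n+1}-c^n\big\rangle.
\end{equation*}
For the gradient contribution, I would apply the elementary identity $\tfrac12(a^2-b^2)=a(a-b)-\tfrac12(a-b)^2\le a(a-b)$ componentwise with $a=\delta_x^c[c^{n+1}]$, $b=\delta_x^c[c^n]$, then use linearity of $\delta_x^c$ and \eqref{eqFullyDiscreteVariationalPrinciples01} — noting $\delta_x^c[c^{n+1}]\in\mathcal{V}_u^0$ — to obtain
\begin{equation*}
\tfrac12\(\|\delta_x^c[c^{n+1}]\|^2-\|\delta_x^c[c^n]\|^2\)\le-\big\langle \delta_x^u[\delta_x^c[c^{n+1}]],\,c^{n+1}-c^n\big\rangle,
\end{equation*}
and the analogous bound in the $y$-direction from \eqref{eqFullyDiscreteVariationalPrinciples02} (using $\delta_y^c[c^{n+1}]\in\mathcal{V}_v^0$).

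Adding the bulk estimate to $\kappa$ times the two gradient estimates, invoking the identity $\nu(c^n)c^{n+1}-s_r(c^n)=\mu_{\textnormal{ideal}}^{n+1}+\mu_{\textnormal{repulsion}}^{n+1}+\mu_{\textnormal{attraction}}(c^{n})$ that follows from \eqref{eqDiscreteEqnsLinTerm}--\eqref{eqDiscreteEqnsSourceTerm}, and using the scheme \eqref{eqFullyDiscreteSchmA}, I would conclude
\begin{equation*}
F_h(c^{n+1})-F_h(c^n)\le\big\langle \mu_e^{n+1}-\tfrac1\tau(c^{n+1}-c^n),\,c^{n+1}-c^n\big\rangle=-\tfrac1\tau\|c^{n+1}-c^n\|^2\le0,
\end{equation*}
since $\mu_e^{n+1}$ is constant on the mesh and $\langle c^{n+1}-c^n,1\rangle=c_t-c_t=0$ by \eqref{eqFullyDiscreteSchmB}. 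The only step that needs genuine care is the discrete gradient-energy estimate: one must drop the nonnegative defect $\tfrac12(\|\delta_x^c[c^{n+1}-c^n]\|^2+\|\delta_y^c[c^{n+1}-c^n]\|^2)$ \emph{before} applying summation by parts, and verify the boundary memberships $\delta_x^c[c^{n+1}]\in\mathcal{V}_u^0$, $\delta_y^c[c^{n+1}]\in\mathcal{V}_v^0$ required by \eqref{eqFullyDiscreteVariationalPrinciples01}--\eqref{eqFullyDiscreteVariationalPrinciples02}; everything else is a routine transcription of the semi-discrete argument, relying on the pointwise validity of the Section~\ref{secEF} estimates.
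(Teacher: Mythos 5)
Your proposal is correct and follows essentially the same route as the paper: the paper likewise derives the discrete gradient-energy estimate from the identity $a(a-b)-\tfrac12(a-b)^2$ together with the summation-by-parts formulas \eqref{eqFullyDiscreteVariationalPrinciples01}--\eqref{eqFullyDiscreteVariationalPrinciples02}, and then transcribes the bulk-energy argument of \eqref{eqSemiSchmEnergyStabilityProof01} to conclude $F_h(c^{n+1})-F_h(c^n)\le-\tfrac1\tau\|c^{n+1}-c^n\|^2$. Your explicit appeal to Theorem~\ref{thmFullyDiscreteMaximumPrinciple} to ensure $c^{n+1}\in[c_m,c_M]$ before using the Section~\ref{secEF} inequalities is a point the paper leaves implicit, and is a welcome clarification rather than a deviation.
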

\begin{proof}
Using \eqref{eqFullyDiscreteVariationalPrinciples01},   we can derive that
\begin{align}\label{eqFullySchmEnergyStabilityProof01}
\frac{1}{2}\(\|\delta_x^c[c^{n+1}]\|^2-\|\delta_x^c[c^{n}]\|^2\)
&=\frac{1}{2}\(\langle \delta_x^c [c^{n+1}],\delta_x^c [c^{n+1}]\rangle
-\langle \delta_x^c [c^{n}],\delta_x^c [c^{n}]\rangle\)\nonumber\\
&=\langle \delta_x^c [c^{n+1}],\delta_x^c [c^{n+1}]-\delta_x^c [c^{n}]\rangle
-\frac{1}{2}\| \delta_x^c [c^{n+1}]-\delta_x^c [c^{n}]\|^2\nonumber\\
&\leq\langle \delta_x^c [c^{n+1}],\delta_x^c [c^{n+1}-c^{n}]\rangle\nonumber\\
&=-\langle \delta_x^u [\delta_x^c [c^{n+1}]], c^{n+1}-c^{n}\rangle.
\end{align}
It is similar to deduce that
\begin{align}\label{eqFullySchmEnergyStabilityProof02}
\frac{1}{2}\(\|\delta_y^c[c^{n+1}]\|^2-\|\delta_y^c[c^{n}]\|^2\)
\leq -\langle \delta_y^u [\delta_y^c [c^{n+1}]], c^{n+1}-c^{n}\rangle.
\end{align}
Using similar   arguments in \eqref{eqSemiSchmEnergyStabilityProof01} and the estimates  \eqref{eqFullySchmEnergyStabilityProof01} and \eqref{eqFullySchmEnergyStabilityProof01}, we obtain 
\begin{align}\label{eqFullySchmEnergyStabilityProof03}
\langle f_b(c^{n+1})-f_b(c^{n}),1\rangle+\frac{1}{2}\(\|\delta_x^c[c^{n+1}]\|^2-\|\delta_x^c[c^{n}]\|^2\)\nonumber\\
+\frac{1}{2}\(\|\delta_y^c[c^{n+1}]\|^2-\|\delta_y^c[c^{n}]\|^2\)
\leq-\frac{1}{\tau}\|c^{n+1}-c^{n}\|^2,
\end{align}
which yields \eqref{eqFullySchmEnergyStability}.
\end{proof}

%%%%%%%%%%%%%% Conclusions %%%%%%%%%
\section{Numerical results}

%%%%%%%%%%%%%%%%%%%%%%%%%%%%%%%%%%%%%%%%
In this section,  we present some numerical results to show the performance of the proposed method and  verify the theoretical analysis. We  consider a hydrocarbon  substance,  n-butane (nC$_4$). The related physical data is provided  in Table \ref{tabParametersPREOS}.  The temperature is fixed at 330 K. The value of $\vartheta_0$ has no effects  for the isothermal fluids, so we take $\vartheta_0=0$.  In    numerical tests,   we use the   gas molar density $c^G = 249.1123~\textnormal{mol/m}^3$ and the liquid molar density $c^L = 9526.8428~\textnormal{mol/m}^3$
to initialize   the  distribution of molar density. We note that mass transfer between two phases may take place in dynamical process; that is, the gas phase may be condensed  into the liquid phase, while the liquid phase may also be  vaporized into the gas phase. As a result,  we take $c_m=0.9c^G$ and $c_M=1.1c^L$  in \eqref{eqMolDensRange} and calculate $\epsilon_0=\beta c_M=0.7585$. To satisfy the condition \eqref{eqEnergyFactorizationRepulsionConcaveCondition}, we use the value of $\epsilon_0$ to calculate $\lambda$ as follows
\begin{equation}\label{eqEnergyFactorizationRepulsionConcaveConditionCase01}
\lambda=     \frac{ \epsilon_0}{(1-\epsilon_0)^2}+\(   \frac{\epsilon_0^2}{(1-\epsilon_0)^4}-2 \ln\(1-\epsilon_0\)\frac{\epsilon_0}{(1-\epsilon_0)^2}\)^{1/2}=27.3656.
\end{equation}

 \begin{table} 
\caption{Physical parameters of nC$_4$}
\begin{center}
\begin{tabular}{ccccccccc}
\hline
  $P_c$(bar)   ~~~~~~&  $T_c$(K)   ~~~~~~& $\omega$    \\
\hline
  38.0             ~~~~~~& 425.2          ~~~~~~&0.199           \\
\hline
\end{tabular}
\end{center}
\label{tabParametersPREOS}
\end{table}

In all numerical tests, the spatial domain is taken as $\Omega=[-L,L]^2$, where $L=15$ nm, and    a  uniform rectangular mesh with   $100\times100$ elements is employed. The proposed method admits the use of a very time step size,  so we take $\tau=10^{10}$ s.

 In this example,  we simulate the droplet shrinking problem for 200 time steps. 
In Figure \ref{SquarenC4MolarDensityOfnC4Temperature330K}, we show   molar density distributions  after different time steps, which demonstrate that the square  droplet is changing   to a circle shape.  In Figures \ref{SquarenC4FreeEnergyOfnC4Temperature330K}, we also illustrate the distributions of the bulk free energy density and gradient free energy density  after 200 time steps, respectively.  

In Figure \ref{SquarenC4equiChPtlSeqTemperature330}, the lower bound $\underline{\mu}$ and the upper bound $\bar{\mu}$  are calculated  as
$$\underline{\mu}=\max_{c_m\leq c\leq c_M}\(c_m\nu(c)-s_r(c)\),~~~\bar{\mu}=\min_{c_m\leq c\leq c_M}\(c_M\nu(c)-s_r(c)\).$$
 Figure \ref{SquarenC4equiChPtlSeqTemperature330} shows that $\mu_e^n$ always varies between $\underline{\mu}$ and $\bar{\mu}$, thus the condition \eqref{eqMaximumPrincipleCondition}  is invariably true. Figures \ref{SquarenC4MolDensMinBoundTemperature330} and \ref{SquarenC4MolDensMaxBoundTemperature330} depict the minimum and maximum values of molar density $c^n$.  Due to the effects of liquid vaporization and gas condensation, molar density changes a lot  at the beginning  period and then tends to the stead values in the later period, but we observe that  $c^n$ always fluctuates between $c_m$ and $c_M$, thus the maximum principle is verified.

 Figure \ref{SquarenC4freeEnergyFullTemperature330} illustrates that the total energies are dissipated with time steps, while Figure \ref{SquarenC4freeEnergyPartTemperature330}   plots total energies at the last twenty time steps, which are still decreasing. Therefore, the proposed scheme can preserve the energy dissipation law.

\begin{figure}
            \centering \subfigure[Initial]{
            \begin{minipage}[b]{0.31\textwidth}
               \centering
             \includegraphics[width=0.95\textwidth,height=1.33in]{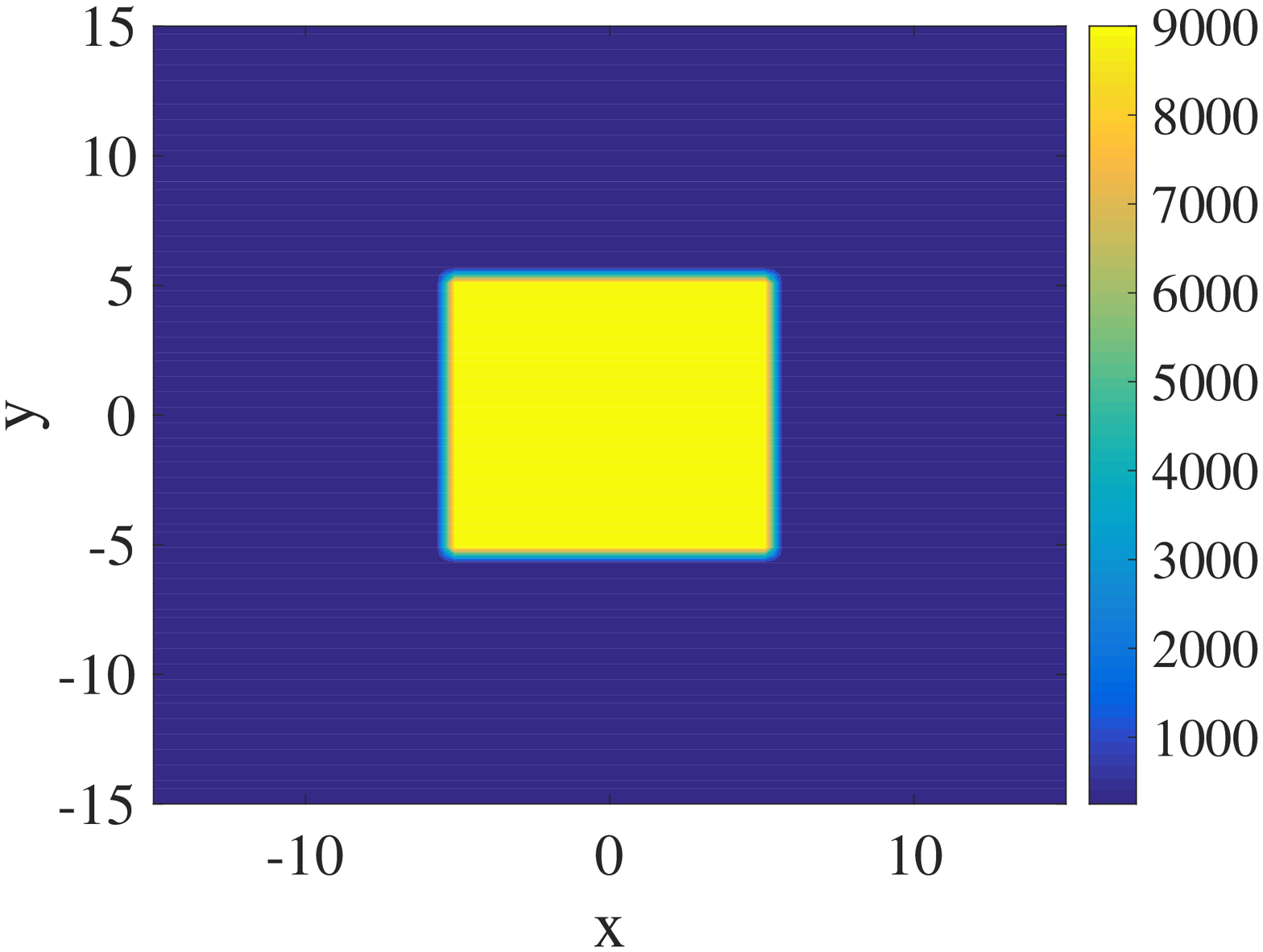}
              \label{IsolatedSysnC4MolarDensityOfnC4iT0}
            \end{minipage}
            }
            \centering \subfigure[$n=100$]{
            \begin{minipage}[b]{0.31\textwidth}
            \centering
             \includegraphics[width=0.95\textwidth,height=1.33in]{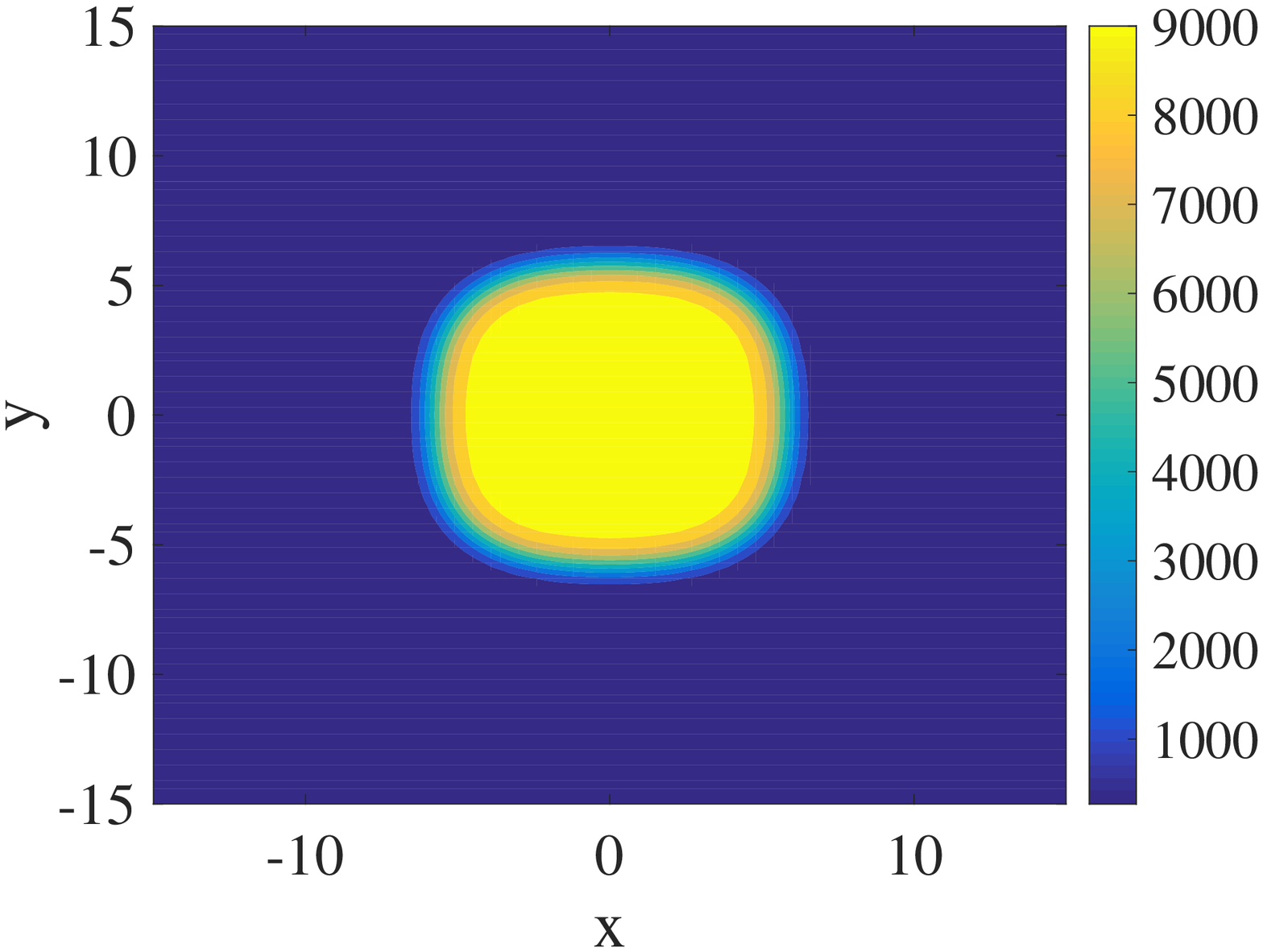}
            \end{minipage}
            }
           \centering \subfigure[$n=200$]{
            \begin{minipage}[b]{0.3\textwidth}
            \centering
             \includegraphics[width=0.95\textwidth,height=1.33in]{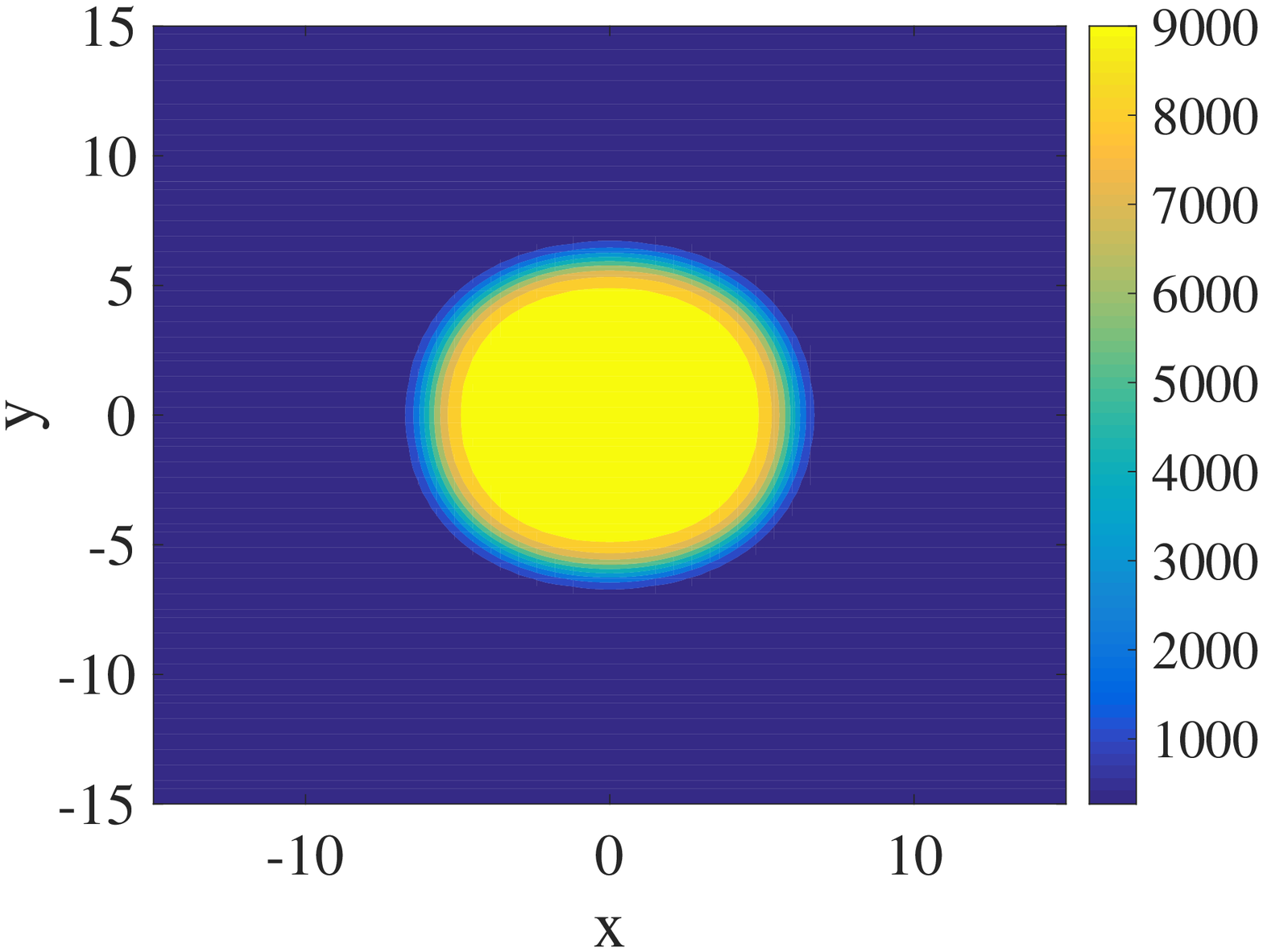}
            \end{minipage}
            }
           \caption{Example 1:     molar density distributions.}
            \label{SquarenC4MolarDensityOfnC4Temperature330K}
 \end{figure}

\begin{figure}
            \centering \subfigure[]{
            \begin{minipage}[b]{0.35\textwidth}
            \centering
             \includegraphics[width=\textwidth,height=0.9\textwidth]{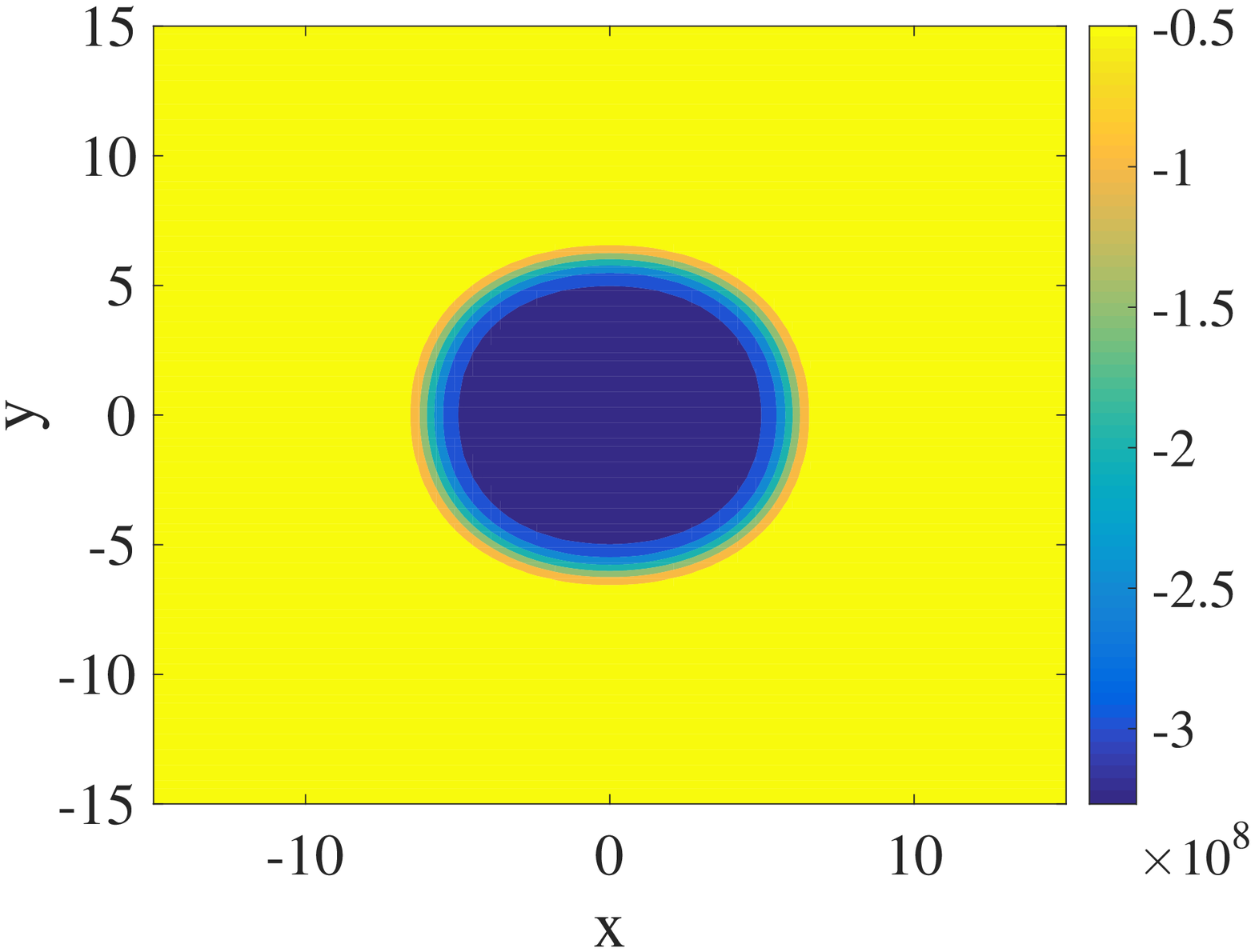}
            \end{minipage}
            }
           \centering \subfigure[]{
            \begin{minipage}[b]{0.35\textwidth}
            \centering
             \includegraphics[width=\textwidth,height=0.9\textwidth]{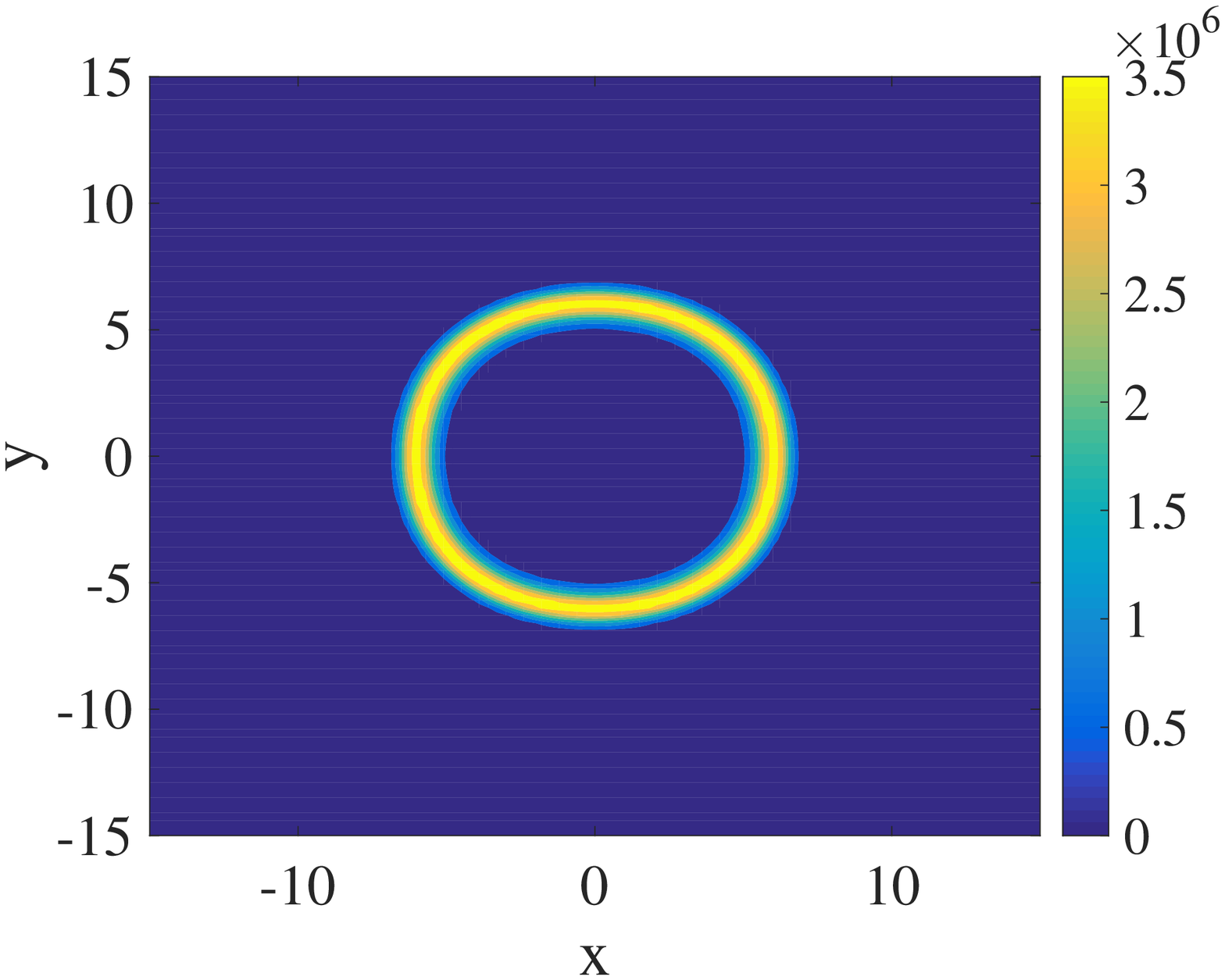}
            \end{minipage}
            }
           \caption{Example 1:  (a) the  bulk free energy density   after      200   time steps; (b) the gradient free energy  after      200   time steps.}
            \label{SquarenC4FreeEnergyOfnC4Temperature330K}
 \end{figure}

\begin{figure}
           \centering \subfigure[Condition \eqref{eqMaximumPrincipleCondition}]{
            \begin{minipage}[b]{0.3\textwidth}
            \centering
             \includegraphics[width= \textwidth,height=0.95\textwidth]{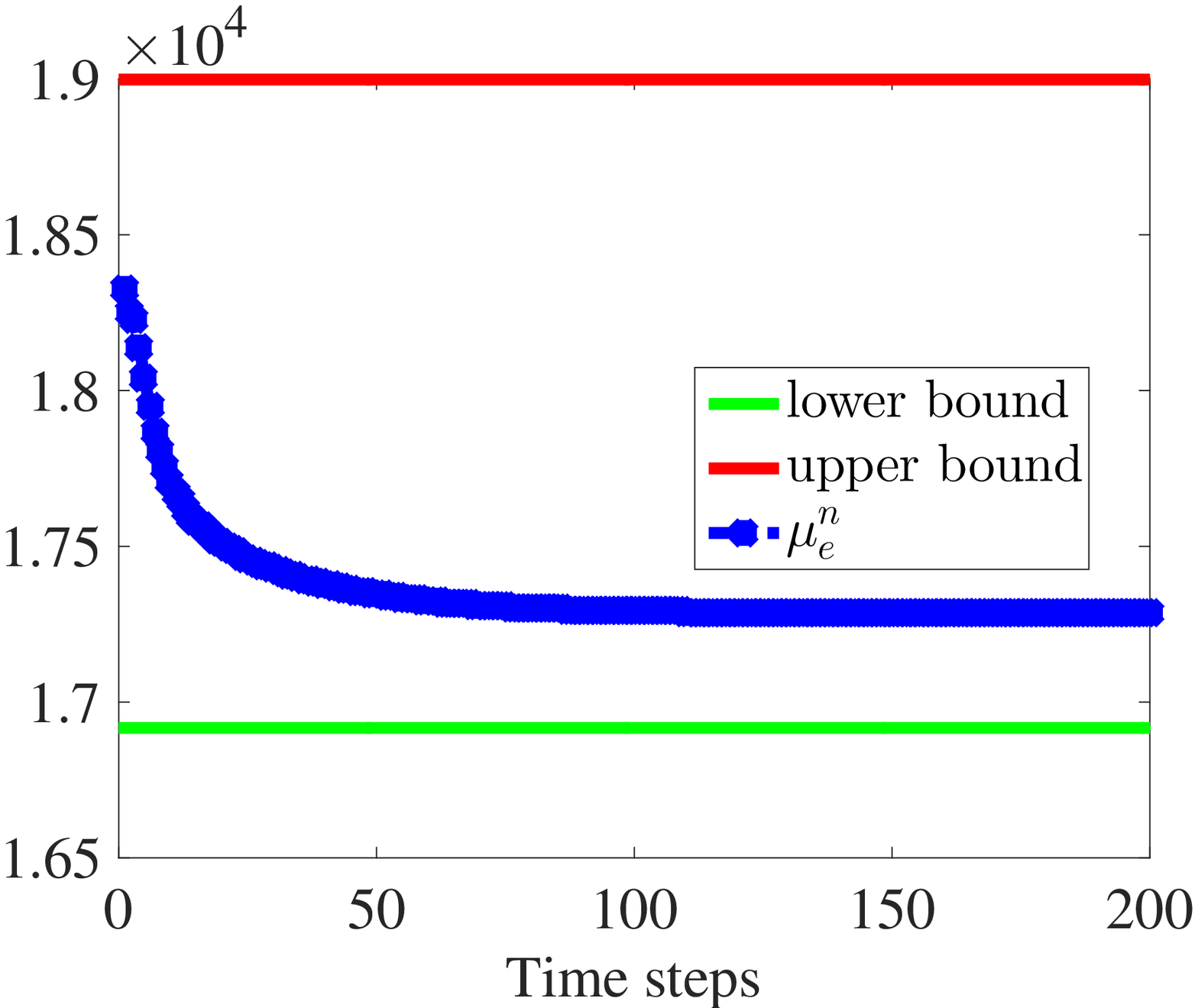}
             \label{SquarenC4equiChPtlSeqTemperature330}
            \end{minipage}
            }
            \centering \subfigure[Minima of molar density]{
            \begin{minipage}[b]{0.3\textwidth}
            \centering
             \includegraphics[width= \textwidth,height=0.95\textwidth]{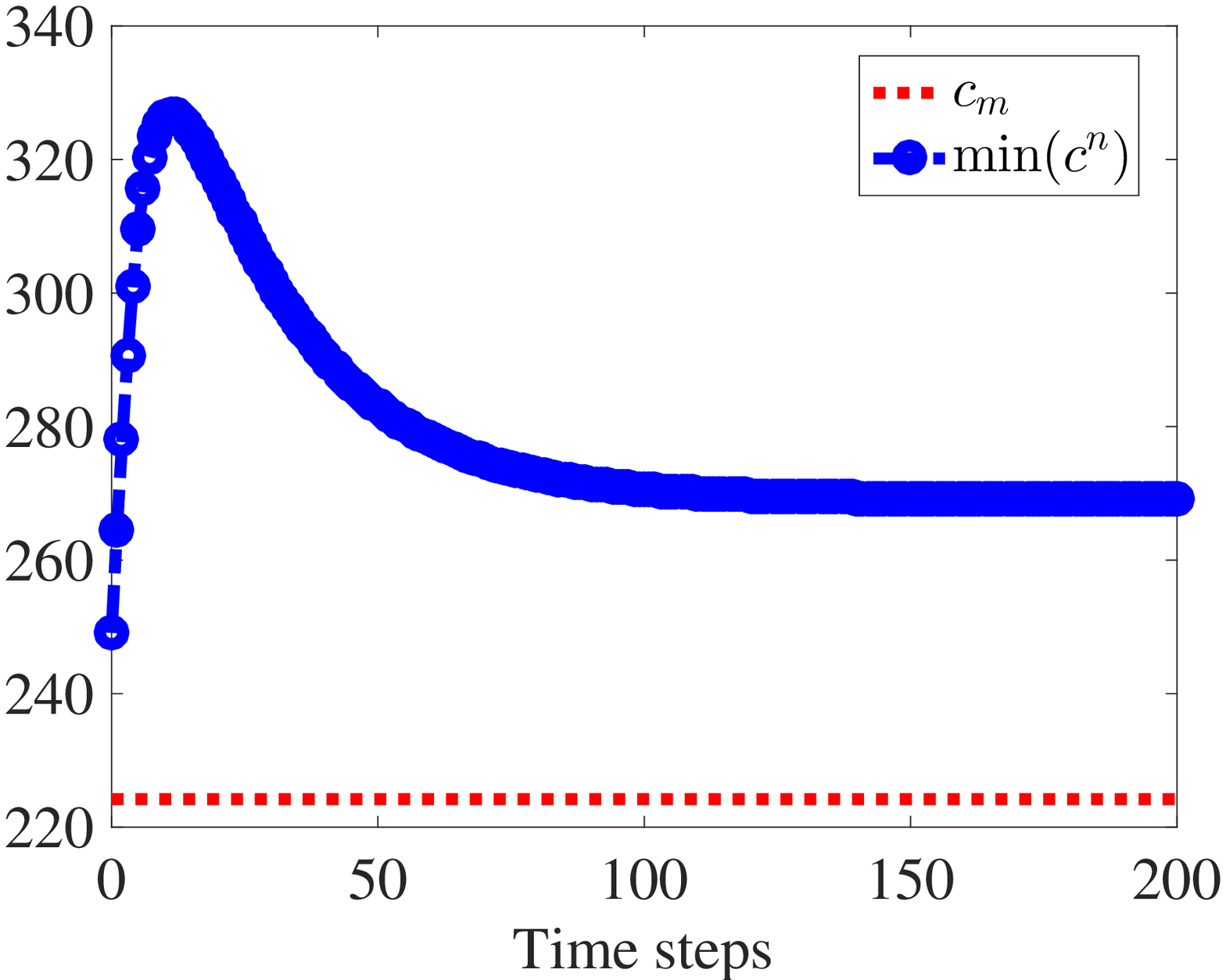}
             \label{SquarenC4MolDensMinBoundTemperature330}
            \end{minipage}
            }
            \centering \subfigure[Maxima of molar density]{
            \begin{minipage}[b]{0.3\textwidth}
            \centering
             \includegraphics[width= \textwidth,height= 0.95\textwidth]{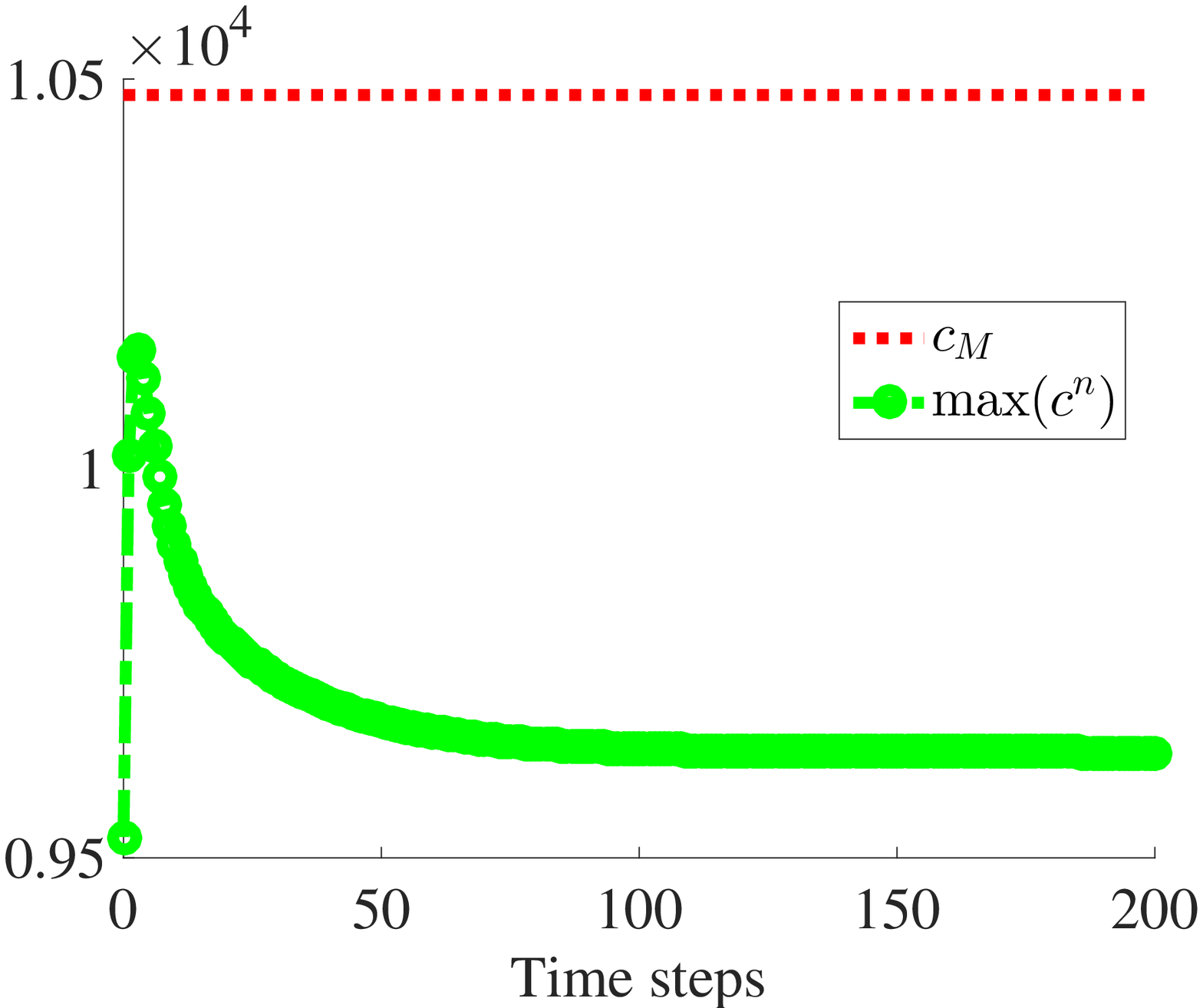}
             \label{SquarenC4MolDensMaxBoundTemperature330}
            \end{minipage}
            }
           \caption{Example 1: verification of the condition \eqref{eqMaximumPrincipleCondition} and the maximum principle.}
            \label{ex1verificationcondition}
 \end{figure}
 
 \begin{figure}
           \centering \subfigure[]{
            \begin{minipage}[b]{0.38\textwidth}
            \centering
             \includegraphics[width=0.95\textwidth,height=1.33in]{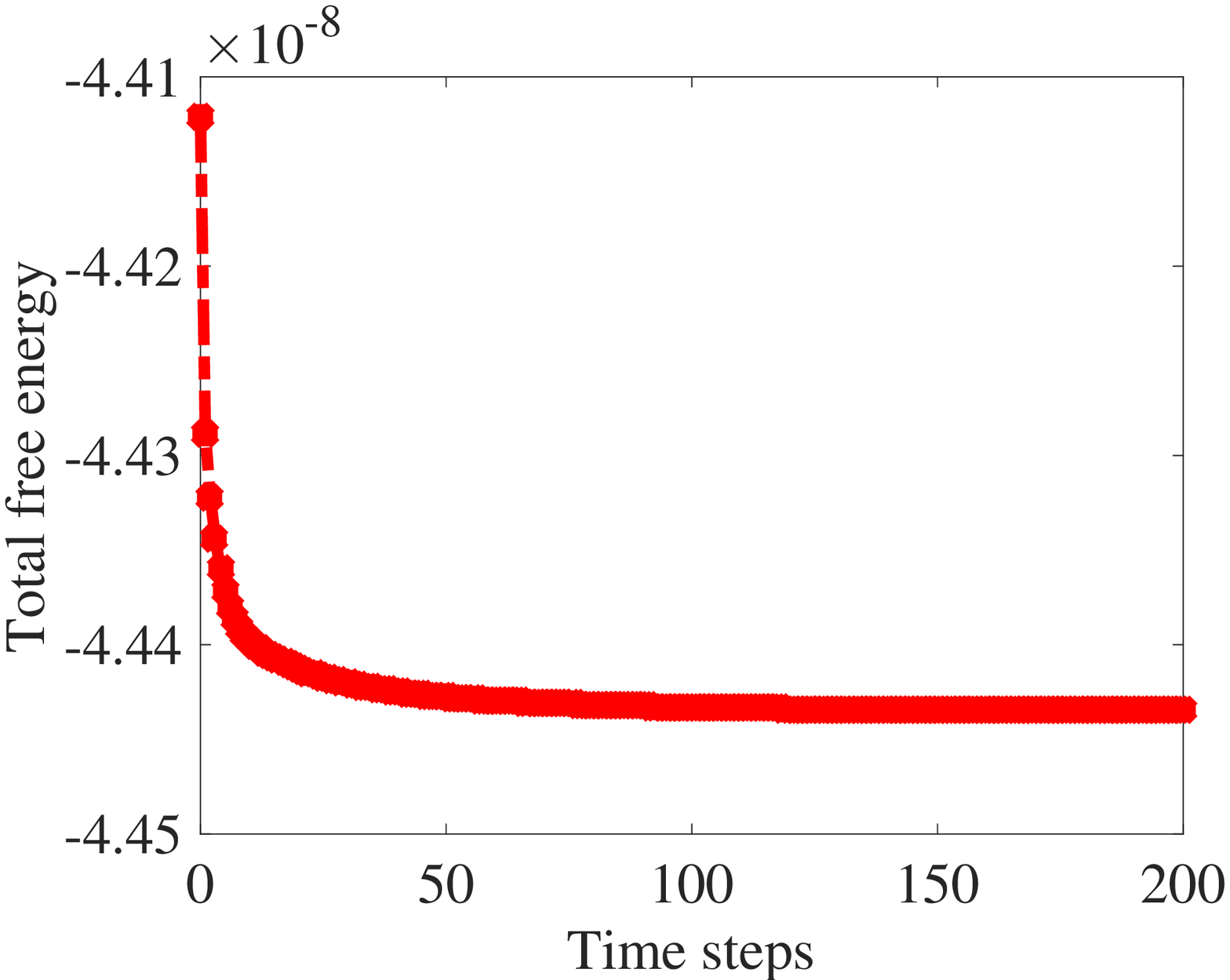}
             \label{SquarenC4freeEnergyFullTemperature330}
            \end{minipage}
            }
            \centering \subfigure[]{
            \begin{minipage}[b]{0.38\textwidth}
            \centering
             \includegraphics[width=0.95\textwidth,height=1.33in]{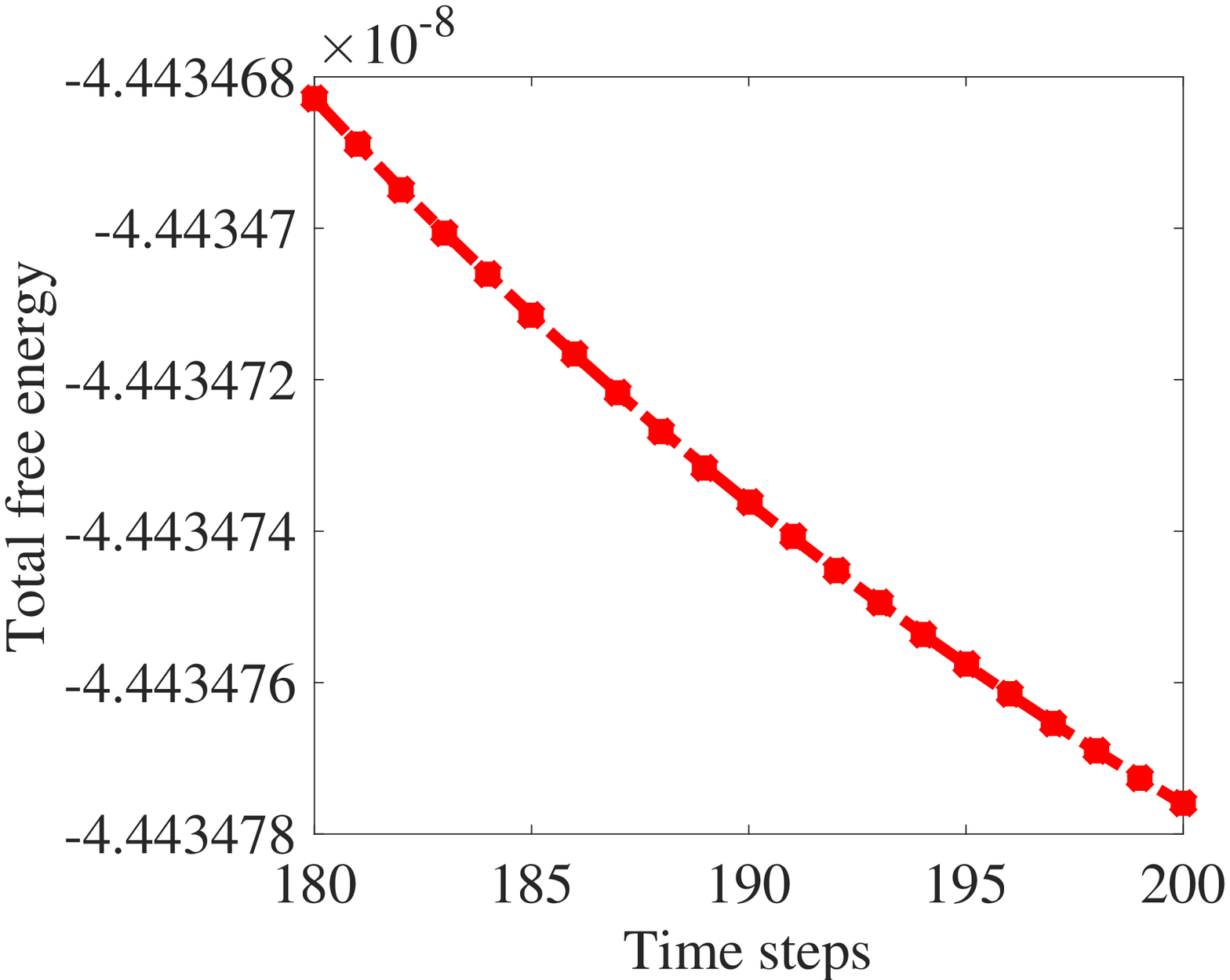}
             \label{SquarenC4freeEnergyPartTemperature330}
            \end{minipage}
            }
           \caption{Example 1: total energy profiles with time steps.}
            \label{SquarenC4freeEnergyTemperature330}
 \end{figure}

 %%%%%%%%%%%%%%% Conclusions %%%%%%%%%
\section{Conclusions}

%%%%%%%%%%%%%%%%%%%%%%%%%%%%%%%%%%%%%%%%

 A novel energy factorization (EF) approach has been proposed to construct the linear energy stable numerical scheme for the diffuse interface model with the Peng-Robinson equation of state that is   one of the most  useful and prominent     tools    in   chemical engineering and petroleum industry. 
 Compared with the convex-splitting schemes, the   semi-implicit numerical scheme constructed by the EF approach   is linear and  easy-to-implement.  Compared  with the IEQ/SAV schemes,  the proposed scheme   inherits the original energy dissipation law.  Moreover, we prove that    the maximum principle  holds for both the time sem-discrete   form and the cell-centered finite difference fully discrete form under certain conditions.  Numerical results   validate  the stability and efficiency of the proposed scheme.  In the future work, we will study the applications of the EF approach to multi-component fluids and phase-field models.

\small
%\bibliographystyle{plain}
%\bibliography{References}

\end{document}